\pgfplotsset{compat=1.9}
\newtheorem{theorem}{Theorem}[section]
\newtheorem{lemma}[theorem]{Lemma}
\newtheorem{definition}[theorem]{Definition}
\newtheorem{corollary}[theorem]{Corollary}
\newtheorem{problem}{Problem}
\newtheorem{remark}[theorem]{Remark}
\newtheorem{example}[theorem]{Example}
\newcommand{\nom}{\operatorname{nom}}
\newcommand{\oprocendsymbol}{\hbox{$\bullet$}}
\newcommand{\oprocend}{\relax\ifmmode\else\unskip\hfill\fi\oprocendsymbol}
\newcommand{\stkout}[1]{\ifmmode\text{\sout{\ensuremath{#1}}}\else\sout{#1}\fi}
\definecolor{paleGreen}{rgb}{.3, .7, .3}
\newcommand{\until}[1]{\{1,\dots,#1\}}
\DeclareMathOperator*{\argmin}{arg\,min}
\DeclareMathOperator*{\dist}{d}
\newcommand{\interior}[1]{\operatorname{int}(#1)}
\newcommand{\real}{\mathbb{R}}
\newcommand{\realpos}{\mathbb{R}_{>0}}
\newcommand{\Dc}{\mathcal{D}}
\newcommand{\Zc}{\mathcal{Z}}
\newcommand{\Cc}{\mathcal{C}}
\newcommand{\Xc}{\mathcal{X}}
\newcommand{\Nc}{\mathcal{N}}
\newcommand{\dt}{\frac{d}{dt}}
\begin{document}
\title{\LARGE \bf Continuity and Boundedness of Minimum-Norm CBF-Safe
  Controllers}

\author{Mohammed Alyaseen \quad Nikolay Atanasov \quad Jorge Cort\'es
  \thanks{M. Alyaseen, N. Atanasov, and J. Cort\'es are with the
    Contextual Robotics Institute, UC San Diego,
    \texttt{\{malyasee,natanasov,cortes\}@ucsd.edu}. M. Alyaseen
    is also affiliated with Kuwait University as a holder of a scholarship.} %
}

\maketitle

\begin{abstract}
  The existence of a Control Barrier Function (CBF) for a
  control-affine system provides a powerful design tool to ensure
  safety. Any controller that satisfies the CBF condition and ensures
  that the trajectories of the closed-loop system are well defined
  makes the zero superlevel set forward invariant.  Such a controller
  is referred to as \emph{safe}.  This paper studies the regularity
  properties of the minimum-norm safe controller as a stepping stone
  towards the design of general continuous safe feedback controllers.
  We characterize the set of points where the minimum-norm safe
  controller is discontinuous and show that it depends solely on the
  safe set and not on the particular CBF that describes it.  Our
  analysis of the controller behavior as we approach a point of
  discontinuity allows us to identify sufficient conditions to ensure
  it grows unbounded or it remains bounded.  Examples illustrate our
  results, providing insight into the conditions that lead to
  (un)bounded discontinuous minimum-norm controllers.
\end{abstract}

\section{Introduction}

Safety-critical control for dynamical systems is an active area of
research with applications to multiple domains such as transportation,
autonomy, power systems, robotics, and manipulation.  The notion of
Control Barrier Function (CBF) has revealed to be a particularly
useful tool as it provides a mathematically precise formulation of the
range of design choices available to keep a desired set safe.  This
has spurred a flurry of activity aimed at synthesizing safe
controllers as solutions to optimization-based formulations whose cost
functions may encode energy considerations, minimal deviation from
prescribed controllers, or other performance goals.  A critical aspect
in this endeavor is ensuring that safe controllers enjoy appropriate
regularity (boundedness, continuity, Lipschitzness, smoothness)
properties for ease of implementation and to ensure well-posedness of
the resulting closed-loop system.  Motivated by these observations,
this work studies the continuity properties of the minimum-norm safe
controller and analyzes conditions under which the existence of a
bounded safe controller is guaranteed.

\emph{Literature Review:} The notion of CBF builds on Nagumo's
theorem~\cite{MN:42}, which establishes the invariance of a set with
respect to trajectories of an autonomous system given suitable
transversality conditions are satisfied on the boundary of that
set. The extension to control systems introduced in~\cite{PW-FA:07}
enforces a strict Nagumo-like condition to hold on the whole set to be
made invariant. This condition was relaxed in~\cite{XX-PT-JWG-ADA:15}
to arrive at the concept of CBF used here.
The use of CBFs to enforce safety as forward set invariance has since
expanded to many domains (we refer
to~\cite{ADA-SC-ME-GN-KS-PT:19,WX-CGC-CB:23} for a comprehensive
overview).

Particularly useful is the fact that, if a CBF-certified safe
controller is Lipschitz, then the closed-loop system is well posed and
the superlevel set of the CBF is forward invariant.  It is common to
synthesize such controllers via optimization formulations which are
examples of parametric optimization problems, with the optimization
variable being the control signal and the parameter being the
state. The resulting controller is well defined but is generally not
guaranteed to be continuous, let alone Lipschitz.  If the controller
is discontinuous, then it might become unbounded even if the safe set
is compact, violating hard limits imposed by hardware constraints or
energy considerations. This has motivated the study in the literature
of various sufficient conditions to ensure Lipschitzness or continuity
of optimization-based controllers.  One set of
conditions~\cite{XX-PT-JWG-ADA:15} relies on assuming uniform relative
degree 1 of the CBF with respect to the dynamical system. Another
condition~\cite{RK-ADA-SC:21} asks that the properties defining the
CBF are satisfied on an open set containing the safe set. Other
works~\cite{BJM-MJP-ADA:15} derive continuity-ensuring conditions
resorting to the classical parametric optimization
literature~\cite{GS:18}, of which the optimization-based controller
synthesis problem is a special case.
In parametric optimization, the work~\cite{WWH:73} proves the
continuity of the optimizer under continuity properties of the
point-to-set map defined by the constraints.  Other works derive
continuity results under different types of constraint qualification
conditions, including linear independence~\cite{AVF-YI:90} and
Mangasarian-Fromovitz~\cite{SMR:82}. The work~\cite{BJM-MJP-ADA:15}
builds on this body of work to relax linear independence qualification
for the special case of a convex linearly constrained quadratic
parametric program. Our exposition here unifies these conditions under
a common framework and provides a generalization, ensuring continuity
of the min-norm safe controller under weaker conditions. We also
analyze the boundedness of the controller when the conditions are not
met and discontinuity arises.  Finally, because of the connection with
bounded control, relevant to the present work are methods for
constructing CBFs under limited control
authority~\cite{WSC-DVD:20,DRA-DP:21,AC:21a} and the combination of
CBFs with Hamilton-Jacobi reachability analysis to consider the impact
of control bounds on the computation of safe
sets~\cite{JJC-DL-KS-CJT-SLH:21}.


\emph{Statement of Contributions:} Given a CBF for a control-affine
system, we study the boundedness properties of the associated
minimum-norm safe controller. Apart from its intrinsic interest, the
focus on this controller is justified by the fact that if it is not
bounded, then no safe controller is.  We start by explaining the
limitations of the state of the art to guarantee the boundedness of
safe controllers and illustrating them in two examples.  Our first
contribution is a rigorous characterization of the points of
discontinuity of the minimum-norm safe controller.  As a byproduct,
this result allows us to generalize the known conditions for ensuring
continuity.  We show that the points of discontinuity are fully
determined by the safe set and are independent of the specific choice
of the CBF or the sensitivity to the violation of the CBF condition.
These results set the basis for our second contribution, which is the
identification of tight conditions to ensure the (un)boundedness of
the minimum-norm controller when approaching a point of discontinuity.
We revisit the two examples in light of the technical discussion to
explain the observed behavior of the minimum-norm controller.  Our
results are applicable to more general formulations of safety filters
beyond the minimum-norm controller and have important implications for
the synthesis of safe feedback controllers subject to hard constraints
on the control effort.


\emph{Notation:} The closure, interior, and boundary of a set $\Xc$
are denoted by $\bar {\Xc}$, $\interior{\Xc}$, and $\partial \Xc$,
respectively.  Given $s : \Xc \subseteq \real^n \to \real$, $s \in C$
denotes that $s$ is continuous and $s \in C^n$ denotes that $s$ has a
continuous $n^{\text{th}}$ derivative. The gradient of $s \in C^1$ is
denoted by $\nabla s$ and written as a row vector. A function $s$ is
locally Lipschitz at $x$ with respect to $\Xc$ if there exists a
neighborhood $\Nc$ and a constant $L \in \real$ such that
$\|s(x_1) - s(x_2)\| \leq L \|x_2 - x_1\|$, for all
$x_1, x_2 \in \Nc \cap \Xc$. A function $s$ is locally Lipschitz on
$\Xc'$ if it is locally Lipschitz at $x$ with respect to $\Xc'$, for
all $x \in \Xc'$. A function $\alpha: (-a,b) \to \real$ is an extended
class-$\kappa$ function if it is strictly increasing and
$\alpha(0) = 0$.

\section{Problem Statement}
We consider a non-linear control affine system over an open set
$\Xc \subseteq \real^n$
%
\begin{align}
  \dot x = f(x) + G(x)u \label{eq:mimoSystem},
\end{align}
where $x\in \Xc$ and $u\in \real^m$. Here,
$f:\Xc \to \real^n$ and the column components
$g_i:\Xc \to \real^{n}$, $i \in \until{m}$ of $G$ are
locally Lipschitz on $\Xc$. Safety of the system can be
certified through the following notion.

\begin{definition}[Control Barrier
  Function~\cite{ADA-SC-ME-GN-KS-PT:19}]\label{def:CBFs}
  Let $h:\Xc \to \real$ be $C^1$ and define its superlevel
  set
  $\Cc \triangleq \{x\in \real^n \;|\; h(x) \geq
  0\}\subseteq \Xc$.  The function $h$ is a CBF if
  $\nabla h(x) \neq 0 $ for all $x \in \partial \Cc $ and there
  exists a set $\Dc \subseteq \Xc$ such that
  $\Cc  \subseteq \Dc$ and for all $x \in \Dc$,
  there exists $u \in \real^m$,
  \begin{align}
    \nabla h(x)f(x) + \alpha(h(x)) + \nabla h(x)G(x)u \geq
    0 .\label{eq:cbfConditionD} 
  \end{align}
  where $\alpha$ is an extended class-$\kappa$ function. 
\end{definition}

If $h$ admits an open set $\mathcal{D}$ satisfying the above
definition, then we refer to it as a \emph{strong CBF}, otherwise we
call it a \emph{weak CBF}. For each $x \in \Dc$, we denote by
$K_{\text{cbf}}(x)$ the set of input values $u$
satisfying~\eqref{eq:cbfConditionD} which, by
Definition~\ref{def:CBFs}, is nonempty.

The central result~\cite[Theorem 2]{ADA-SC-ME-GN-KS-PT:19} of
CBF-based safety is that, if there exists a Lipschitz feedback
controller $\bar u: \real^n \to \real^m$ satisfying
$\bar u(x) \in K_{\text{cbf}}(x)$ in $\Dc$, then the set $\Cc $ is
forward invariant with respect to the trajectories of the closed-loop
system~\eqref{eq:mimoSystem} under $u = \bar u(x)$. One particular
choice of controller that satisfies the CBF
condition~\eqref{eq:cbfConditionD} by construction is the so-called
min-norm safe feedback controller
$u^*(x) \triangleq \argmin_{u\in K_{\text{cbf}}(x)} \|u\|^2$.  In
general, this controller is not necessarily Lipschitz. In fact, it
might not even be bounded. This motivates our problem statement.

\begin{problem}\label{pr:boundednessProb}
  {\rm Let $h$ be a CBF with a compact superlevel set $\Cc $.
    Determine the states in $\Cc $ where the min-norm safe
    feedback controller $u^*$ is discontinuous and find conditions
    under which it is bounded/unbounded as the discontinuous states
    are approached.} \oprocend
\end{problem}

Our focus on establishing boundedness when continuity of the min-norm
controller fails is motivated by three reasons. First, proving that
the min-norm controller is unbounded shows that no safe bounded
controller exists. This would also mean that there does not exit a
continuous safe feedback controller. Second, if the min-norm is
discontinuous but bounded, then there is room for finding a safe
continuous controller. Finally, our investigation provides grounds for
exploring whether the use of discontinuous controllers to ensure
control-invariance for safety is applicable to a larger class of
scenarios.

We end this section by noting that our results are directly applicable
to safety filters based on quadratic programming (QP). In fact, any
controller $u$ that minimizes a cost function $\|u - u_{\nom}(x)\|^2$
subject to \eqref{eq:cbfConditionD}, where $u_{\nom}$ is a predefined
nominal controller, can be interpreted as a min-norm controller after
the change of variables~$u' = u - u_{\nom}$.

\section{Continuity of the Min-Norm Safe Controller: Limitations of
  the State of the Art}\label{sec:Egs}

This section reviews known conditions in the literature that ensure
the min-norm controller $u^*$ is continuous and thus bounded in a
compact set $\Cc $, and illustrates its limitations in a couple
of simple examples.
Considering the CBF
condition~\eqref{eq:cbfConditionD}, notice that if
$\nabla h(x)f(x) + \alpha(h(x)) \geq 0$, then $u = 0$
validates~\eqref{eq:cbfConditionD}. For such points, the min-norm
controller $u^*(x)=0$. On the other hand, when
$\nabla h(x)f(x) + \alpha(h(x)) < 0$, a non-zero control is needed to
ensure~\eqref{eq:cbfConditionD}. We thus split $\Dc$ into the
two sets
\begin{subequations}\label{eq:min-norm-piecewise}
  \begin{align}
    \Dc_+ &\triangleq \{x \in \Dc \;\;|\;\; \nabla h(x)f(x) +
        \alpha(h(x)) \geq 0\}, \label{eq:S}
    \\ 
    \Dc_- &\triangleq \{x \in \Dc \;\;|\;\; \nabla h(x)f(x) +
        \alpha(h(x)) < 0\}.\label{eq:U}
\end{align}  
\end{subequations}
Notice that $u^*$ is defined as the optimizer of a quadratic program
with one linear constraint. Such programs have a unique solution,
cf.~\cite[8.1.1]{SB-LV:09}, with the closed-form formula
{\small
\begin{align}\label{eq:minnorm}
  u^*(x) \!=\! 
  \begin{cases}
    0 , & x \in \Dc_+
    \\
    -\frac{\nabla h(x)f(x) + \alpha(h(x))}{\|\nabla
      h(x)G(x)\|^2}(\nabla h(x)G(x))^T, &
     x \in \Dc_-.
  \end{cases}
\end{align}
}
This expression is well defined on $\Dc$
since~\eqref{eq:cbfConditionD} implies that, if $\bar x \in \Dc_-$,
then $\|\nabla h(\bar x)G(\bar x)\| \neq 0$.

\begin{lemma}[Strong CBF Implies Continuous Min-Norm
  Controller~{\cite[Thm.~5]{RK-ADA-SC:21}}]\label{lem:strongImpliesBounded}
  Let $h$ be a strong CBF with a compact superlevel set $\Cc $.
  Then $u^*$ is continuous on $\Cc $.
\end{lemma}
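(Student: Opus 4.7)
The strategy is to exploit the piecewise closed-form expression for $u^*$ in~\eqref{eq:minnorm} and verify continuity separately on each piece and at their interface. First I would note that on $\interior{\mathcal{D}_+}\cap \mathcal{C}$ the controller is identically zero and hence trivially continuous. For points in $\interior{\mathcal{D}_-} \cap \mathcal{C}$, the quotient formula defines a continuous function of $x$ provided the denominator does not vanish; but if $\nabla h(x) G(x) = 0$ for some $x \in \mathcal{D}_-$, then~\eqref{eq:cbfConditionD} would collapse to $\nabla h(x) f(x) + \alpha(h(x)) \geq 0$, contradicting the definition of $\mathcal{D}_-$ in~\eqref{eq:U}. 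Hence $u^*$ is continuous on both open pieces.

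The core of the proof is the analysis at interface points, i.e., $\bar x \in \mathcal{C}$ satisfying $\nabla h(\bar x) f(\bar x) + \alpha(h(\bar x)) = 0$. It suffices to show $u^*(y) \to 0 = u^*(\bar x)$ whenever $y \to \bar x$ along $\mathcal{D}_-$, since the approach from $\mathcal{D}_+$ is immediate. For $y \in \mathcal{D}_-$ we have
\begin{align*}
  \|u^*(y)\| \;=\; \frac{|\nabla h(y) f(y) + \alpha(h(y))|}{\|\nabla h(y) G(y)\|},
\end{align*}
with both the numerator and the denominator tending to zero, so the task is to control the ratio.

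If $\nabla h(\bar x) G(\bar x) \neq 0$, continuity is immediate because the denominator remains bounded below in a neighborhood while the numerator vanishes. If instead $\nabla h(\bar x) G(\bar x) = 0$, I would invoke the strong CBF hypothesis in its full strength: the openness of $\mathcal{D}$ ensures that~\eqref{eq:cbfConditionD} is feasible not merely at $\bar x$ but throughout a whole neighborhood of $\bar x$, forcing $\nabla h(y) f(y) + \alpha(h(y)) \geq 0$ at every nearby $y$ on which $\nabla h(y) G(y)$ vanishes. I would then formalize this as a lower hemicontinuity property of the set-valued map $y \mapsto K_{\text{cbf}}(y)$ at $\bar x$, combine it with the standard upper hemicontinuity (which follows from the closed-graph property), and apply Berge's maximum theorem to the strongly convex objective $\|u\|^2$, yielding continuity of the unique minimizer $u^*$.

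The main obstacle lies precisely in this last sub-case, where numerator and denominator vanish simultaneously: the argument must leverage not just the existence of a feasible $u$ at $\bar x$, but the openness of the feasibility region $\mathcal{D}$ around $\bar x$, to preclude pathological relative rates of decay that would keep the ratio bounded away from zero. Once continuity on the closed set $\mathcal{C}$ is established, compactness of $\mathcal{C}$ delivers boundedness of $u^*$ as an immediate corollary.
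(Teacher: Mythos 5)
Your decomposition is sensible, and everything up to the last sub-case reproduces what the paper already gets from Lemma~\ref{lem:contOfMinnorm}: $u^*$ is locally Lipschitz away from the set $\Zc_{h,\alpha}$ of points where $\nabla h(\bar x)f(\bar x)+\alpha(h(\bar x))$ and $\nabla h(\bar x)G(\bar x)$ vanish simultaneously. (For reference, the paper does not prove this lemma at all; it is imported verbatim from the cited work, so there is no in-paper argument to match.) The entire content of the statement therefore lives in the one sub-case you yourself flag as ``the main obstacle,'' and that is exactly where your argument does not close.

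The gap is that you assert lower hemicontinuity of $y\mapsto K_{\text{cbf}}(y)$ at $\bar x\in\Zc_{h,\alpha}$, offering as justification only that the openness of $\Dc$ forces $\nabla h(y)f(y)+\alpha(h(y))\ge 0$ at nearby points where $\nabla h(y)G(y)=0$. That is strictly weaker than lower hemicontinuity. At such $\bar x$ the constraint~\eqref{eq:cbfConditionD} reads $0+0\cdot u\ge 0$, so $K_{\text{cbf}}(\bar x)=\real^m$, and lower hemicontinuity at $\bar x$ requires in particular that $0$ be approximable by feasible points of nearby problems, i.e., that $\dist(0,K_{\text{cbf}}(y))=\|u^*(y)\|\to 0$ as $y\to\bar x$ --- which is precisely the conclusion being sought, so the Berge step is circular as written. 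Concretely, for the single half-space $\{u:\,a(y)^Tu\ge -b(y)\}$ with $a=(\nabla h\,G)^T$ and $b=\nabla h\,f+\alpha(h)$, the quantity to control on $\Dc_-$ is $|b(y)|/\|a(y)\|$, and the implication ``$a(y)=0\Rightarrow b(y)\ge 0$ on a neighborhood'' by itself does not preclude this ratio from diverging along a sequence with $a(y_n)\neq 0$ and $b(y_n)<0$ (e.g., $\|a\|$ decaying like $t^2$ while $b\sim -t$ along some curve). A correct proof must extract a quantitative relation between the decay rates of the numerator and the denominator from the open-set feasibility hypothesis; your proposal names this obstacle but does not supply the relation. (Secondarily, textbook Berge also assumes a compact-valued correspondence, which $K_{\text{cbf}}$ is not; that is repairable via strong convexity of $\|u\|^2$, but only once lower hemicontinuity is actually in hand.)
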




According to~\cite[Thm. 8]{XX-PT-JWG-ADA:15}, $u^*$ is locally
Lipschitz if the CBF $h$ has relative degree 1, that is, for all
$x \in \Dc$, $\|\nabla h(x)G(x)\| \neq 0$.
%
%
The next result is a generalization of this fact.

\begin{lemma}[Generalization of Relative Degree 1 CBF Implies
  Continuous Min-Norm Controller]\label{lem:LghNeq0}
  Let $h$ be a CBF with compact superlevel set $\Cc $. If for
  all $x \in \partial \Cc $, $\|\nabla h(x)G(x)\| = 0$ implies
  $\nabla h(x)f(x)> 0$, then $u^*$ is locally Lipschitz on
  $\Cc $.
\end{lemma}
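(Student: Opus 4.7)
The plan is to establish local Lipschitzness pointwise on $\Cc$ through a case analysis based on the closed-form expression \eqref{eq:minnorm}. I introduce the shorthand $\phi(x) := \nabla h(x)f(x) + \alpha(h(x))$ and $L_G h(x) := \nabla h(x)G(x)$, both locally Lipschitz on $\Xc$ under the standard smoothness assumed in the CBF literature. The basic structural fact I would exploit throughout is that the CBF condition \eqref{eq:cbfConditionD} forces $\Dc_- \subseteq \{x : \|L_G h(x)\| > 0\}$: whenever $L_G h(x) = 0$ at some $x \in \Dc$, the condition collapses to $\phi(x) \geq 0$ and so $x \in \Dc_+$.

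Fix $x_0 \in \Cc$. If $\phi(x_0) > 0$, then a neighborhood of $x_0$ lies in $\Dc_+$ and $u^* \equiv 0$ there. If $\phi(x_0) < 0$, then $x_0 \in \Dc_-$, so $\|L_G h(x_0)\| > 0$, and on a compact neighborhood $\phi$ stays negative while $\|L_G h\|$ is uniformly bounded away from zero; the $\Dc_-$ branch of \eqref{eq:minnorm} is then a ratio of locally Lipschitz functions with nonvanishing denominator, hence locally Lipschitz. The delicate case is $\phi(x_0) = 0$, where the formula switches branches and one must check that the two pieces mesh in a Lipschitz fashion.

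To handle the transitional case $\phi(x_0) = 0$ with $\|L_G h(x_0)\| > 0$, I would observe that the $\Dc_-$ branch tends to zero on the interface $\{\phi = 0\}$ (since the denominator stays nonzero locally), matching the value $0$ from the $\Dc_+$ branch. With each branch locally Lipschitz on its side and the two branches agreeing on the common interface, a standard glueing argument --- connecting $x_1 \in \Dc_+$ and $x_2 \in \Dc_-$ by a line segment, locating a crossing point of $\{\phi = 0\}$ using continuity of $\phi$, and applying the triangle inequality --- yields local Lipschitzness. The hypothesis of the lemma enters in the remaining configuration $\phi(x_0) = 0$, $\|L_G h(x_0)\| = 0$: on $\partial \Cc$ one has $h(x_0) = 0$, so $\alpha(h(x_0)) = 0$ and hence $\nabla h(x_0) f(x_0) = 0$, directly contradicting the assumed $\nabla h(x_0) f(x_0) > 0$ and ruling out this configuration on the boundary.

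The main obstacle I anticipate is the interior analogue, $x_0 \in \interior{\Cc}$ with both $\phi(x_0) = 0$ and $\|L_G h(x_0)\| = 0$: here the $\Dc_-$ branch near $x_0$ has a $0/0$ form and controlling the ratio requires comparing the vanishing rates of $-\phi$ and $\|L_G h\|^2$. I expect the argument either to show that such $x_0$ cannot be accumulated from $\Dc_-$ (so a neighborhood of $x_0$ sits inside $\Dc_+$ and $u^* \equiv 0$ nearby), or to derive a quadratic bound $-\phi(x) \leq M \|L_G h(x)\|^2$ locally from the CBF-tight structure at $x_0$. This sub-case is where the technical heart of the proof lies.
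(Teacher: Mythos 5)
Your case analysis reproduces, in substance, the paper's Lemma~\ref{lem:contOfMinnorm}: the easy cases, the gluing across the interface $\{\nabla h f + \alpha(h) = 0\}$ when $\|\nabla h\, G\| \neq 0$ there (the paper does this with a Lipschitz clamp $\omega(r)=\max(0,-r)$ rather than a segment-crossing argument, but the two are equivalent), and the use of the hypothesis to exclude the degenerate configuration on $\partial\Cc$. The problem is that the one sub-case you flag as ``the technical heart'' --- $x_0 \in \interior{\Cc}$ with $\nabla h(x_0)f(x_0)+\alpha(h(x_0)) = 0$ and $\|\nabla h(x_0)G(x_0)\| = 0$ --- is exactly the sub-case that cannot be closed by either of the two routes you sketch, so the proof is not complete. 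The quadratic bound $-\big(\nabla h f + \alpha(h)\big) \leq M\|\nabla h\, G\|^2$ is not implied by the CBF condition, which is purely pointwise and qualitative and gives no comparison of vanishing rates; and such a point can perfectly well be an accumulation point of $\Dc_-$ (one can build control-affine examples where the ratio in \eqref{eq:minnorm} blows up at an interior zero of both quantities). Note also that the lemma's hypothesis says nothing about interior points, so no amount of massaging the hypothesis will exclude this configuration directly.

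The missing idea is the paper's Lemma~\ref{lem:ZinBoundary}, which exploits the freedom in the choice of $\alpha$ rather than any property of $h$, $f$, $G$ at the degenerate point. If $\alpha$ validates the CBF condition, replace it by any extended class-$\kappa$ function $\bar\alpha$ with $\bar\alpha(r) > \alpha(r)$ for $r>0$; then at an interior point $\bar x$ where $\nabla h(\bar x)f(\bar x) + \bar\alpha(h(\bar x)) = 0$ one has $h(\bar x) > 0$, hence $\nabla h(\bar x)f(\bar x) + \alpha(h(\bar x)) < 0$, and the CBF condition for $\alpha$ forces $\|\nabla h(\bar x)G(\bar x)\| \neq 0$. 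In other words, for a suitably enlarged $\alpha$ the set $\Zc_{h,\alpha}$ of potentially bad points is pushed entirely into $\partial\Cc$, where your hypothesis-based argument kills it; the interior sub-case is not resolved by estimates but dissolved by the choice of $\alpha$. Without this step (or an explicit standing assumption on $\alpha$), your proof has a genuine gap, and the statement you are left having proved is local Lipschitzness on $\Cc\setminus\Zc_{h,\alpha}$ rather than on all of~$\Cc$.
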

We postpone the proof of Lemma~\ref{lem:LghNeq0} as it is a corollary of
Lemma~\ref{lem:contOfMinnorm} below.

\begin{remark}[Assumption of uniform relative degree is
  limiting]\label{re:limiting}%
  {\rm The assumption of uniform relative degree of the CBF,
    cf.~\cite[Thm. 8]{XX-PT-JWG-ADA:15}, has also been exploited for
    higher-order relative degree CBFs, cf.~\cite{QN-KS:16}. However,
    this assumption fails for the following two general
    cases: 
    \begin{enumerate}
    \item Let $h$ be a continuously differentiable CBF with compact
      superlevel set $\Cc$. For such $h$, there always exists $y \in
      \interior{\Cc}$ 
      where $\|\nabla h(y)G(y) \| = 0$. To see that, note that by
      continuity of $h$ and compactness of its superlevel set, $h$ has a
      maximum value at some state $y \in {\Cc}$
      \cite[Thm. 4.16]{WR:76}. Recalling that $h(x) = 0$ at
      $\partial \Cc$ and $h(x) > 0$ in $\interior{\Cc}$, we deduce that
      $y \in \interior{\Cc}$. By differentiability and first-order
      optimality~\cite[4.2.3]{SB-LV:09}, $\nabla h(y) = 0$ and, hence,
      $\|\nabla h(y)G(y)\| = 0$.
    \item Consider the $n$-dimensional linear system $(A,B)$, where
      $B$ does not have full row rank. Let $h$ be a continuously
      differentiable CBF with compact convex superlevel set
      $\Cc$. Then, there always exists $y \in \partial \Cc$ where
      $\|\nabla h(y)G(y)\| = \|\nabla h(y)B\| = 0$. To see this, note
      that since $B$ is not full row rank, there is a unit vector
      $v \in \real^n$ such that $\|v^TB\| = 0$. By the surjectivity of
      the Gauss map \footnote{The Gauss map assigns points on the
        manifold $\partial \Cc$ to the unit sphere embedded in
        $\real^n$ such that the image of any point in $\partial \Cc$
        is the unit vector normal to $\partial \Cc$ at that point.}
      on the compact smooth surface $\partial
      \Cc$~\cite[Thm. A]{RL:75}, there is a point $y \in \partial \Cc$
      at which the unit normal vector to $\partial \Cc$ is
      $v$. By~\cite[Thm. 3.15]{NB:23}, $\nabla h(y)$ is normal to
      $\partial \Cc$ at $y$ and thus parallel to $v$. Hence,
      $\|\nabla h(y)B\| = 0$.  \oprocend
    \end{enumerate}
  }
\end{remark}

From the continuity of the
min-controller on $\Cc $ ensured by either
Lemmas~\ref{lem:strongImpliesBounded} or~\ref{lem:LghNeq0}, it follows
from standard results in analysis, cf.~\cite[Thm. 5.15]{WR:76}, that
$u^*$ is bounded if $\Cc $ is compact. As we will show later, the
conditions of Lemmas~\ref{lem:strongImpliesBounded}
and~\ref{lem:LghNeq0} are not totally independent: rather, if the
condition of Lemma~\ref{lem:strongImpliesBounded} is not met, i.e.,
$h$ is weak, then the condition of Lemma~\ref{lem:LghNeq0} is not met
either.


CBFs that do not meet the conditions of these results are easy to
encounter and arise in practice
in contexts as simple as the problem of confining a double integrator
to a circle centered at the origin. We next present two examples that
do not satisfy the assumptions and generate discontinuous min-norm
controllers: one being bounded and the other one unbounded.

\begin{example}[Weak CBF with Bounded Min-Norm
  Controller]\label{ex:doubleIntegrator1}\rm{
    Consider the double-integrator dynamics on $\real^2$ defined by
    $f(x) = (x_2,0)$ and $G(x) = (0,1)$. The function
    $h(x) = 1 - x_1^2 - x_2^2$
    is a CBF with any extended class-$\kappa$ function $\alpha$.
    Notice further that $h$ is a weak CBF. To see this, let
    $\bar x = (1+\epsilon, 0)$ with any arbitrarily small
    $\epsilon > 0$. Since $\bar x \notin \Cc $, we have
    $\nabla h(\bar x)f(\bar x) + \alpha (h(\bar x)) + \nabla h(\bar
    x)G(\bar x)u = \alpha (h(\bar x)) < 0$, and therefore condition
    \eqref{eq:cbfConditionD} cannot be satisfied at $\bar{x}$.
    Therefore, $h$ does not admit an open set $\Dc$ satisfying
    Definition~\ref{def:CBFs}.  In addition, the condition of
    Lemma~\ref{lem:LghNeq0} is not satisfied at the boundary point
    $(1,0)$. Consider now the norm of the min-norm safe
    controller~\eqref{eq:minnorm} defined on
    $ \Dc = \Cc $,
    \begin{align*}
      |u_1^*(x)|
      &= 
        \begin{cases}
          0, & x \in \Dc_+, \\
          \frac{2x_1x_2 - \alpha(h(x))}{2x_2}, & x \in \Dc_- .
        \end{cases}
    \end{align*}
    Note that $u_1^*$ is continuous on
    $\Cc  \setminus \{(\pm 1, 0)\}$. However, choosing $\alpha(r) = r$, we have that
    $\limsup_{x \to (1,0), x \in \Dc_-}|u_1^*(x)|$ and
    $\lim_{x \to (1,0), x \in \Dc_+}|u_1^*(x)| = 0$. Thus, although
    discontinuous at $(1,0)$, $u_1^*$ is bounded at this point,
    cf. top plot in Figure~\ref{fig:eg12}.} \oprocend
\end{example}

Example~\ref{ex:doubleIntegrator1} shows that the min-norm safe
controller might be bounded even if the CBF does not satisfy the continuity
conditions in the literature.  The next example shows this fact is not
generic.

\begin{example}[Weak CBF with Unbounded Min-Norm
  Controller]\label{ex:unboundedEg}
  {\rm Consider the dynamics $f(x) = (x_2,0)$ and
    $G(x) = (0,x_2^2)$. With the same reasoning as in Example~\ref{ex:doubleIntegrator1}, $h(x) = 1 - x_1^2 - x_2^2$ is a weak
    CBF that does not satisfy the requirement of
    Lemma~\ref{lem:LghNeq0}. The norm of the min-norm safe controller
    is:
    \[
      |u_2^*(x)| = \begin{cases} 0, & x \in \Dc_+, \\
        \frac{2x_1x_2 - \alpha(h(x))}{2x_2^3}, & x \in \Dc_-.
      \end{cases}
    \]
    Observe that $u_2^*$ is continuous on
    $\Cc  \setminus \{(\pm 1, 0)\}$. However, with the choice $\alpha(r) = r$,
    $\limsup_{x \to (1,0), x \in \Dc_-}|u_2^*(x)| = \infty$ and
    $\lim_{x \to (1,0), x \in \Dc_+}|u_2^*(x)| = 0$. Thus, $u^*_2$ is
    neither continuous nor bounded on $\Cc $, cf. bottom plot
    in Figure~\ref{fig:eg12}.} \oprocend
\end{example}  

\begin{figure}
  \centering
  \includegraphics[width=0.35\textwidth]{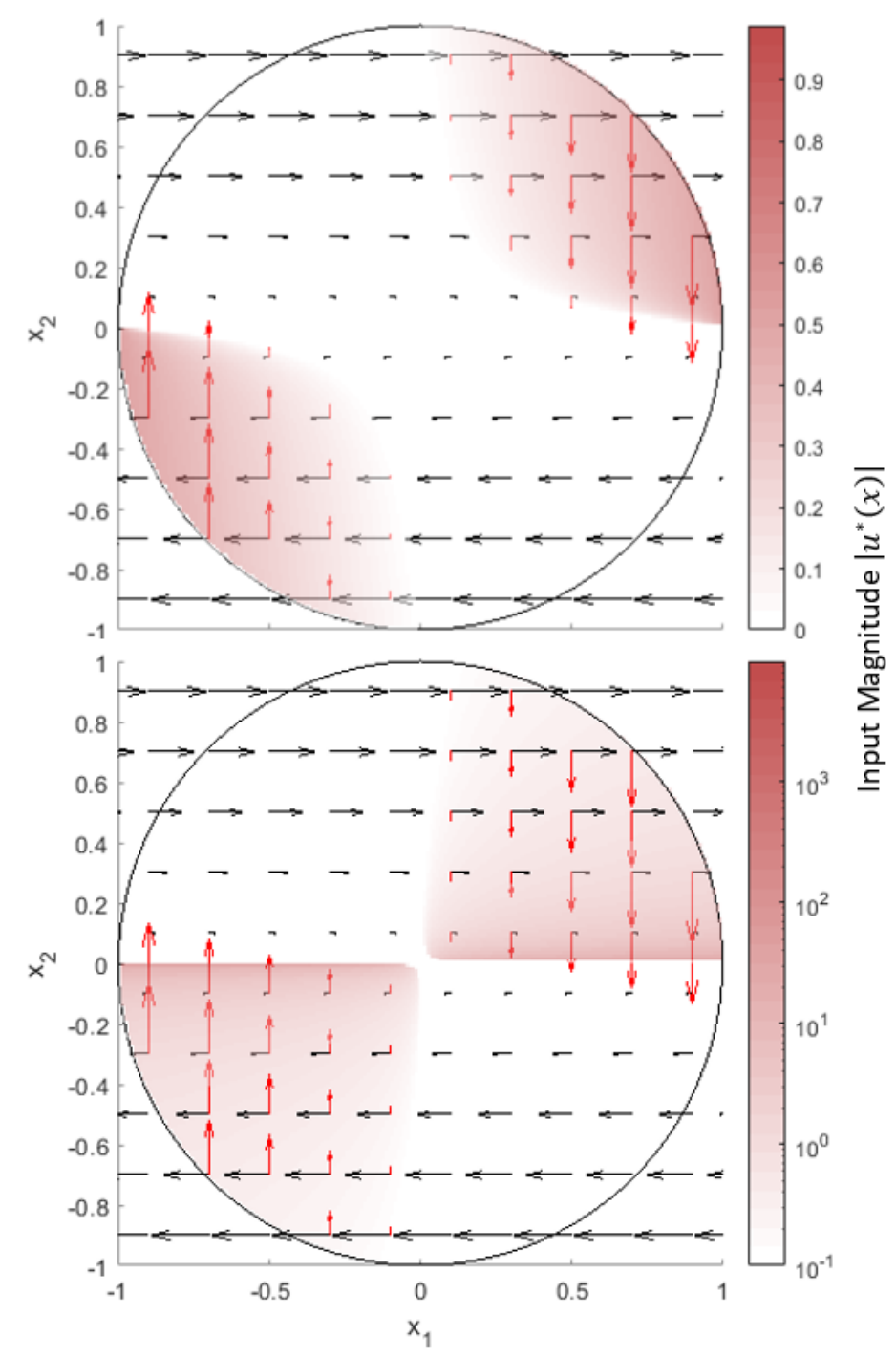}
  \caption{Illustration of boundedness of the min-norm safe
    controller.  Top (resp. bottom) plot corresponds to
    Example~\ref{ex:doubleIntegrator1} (resp.,
    Example~\ref{ex:unboundedEg}). In each case, the unit circle is
    the superlevel set of the weak CBF $h$, black arrows show the
    vector field $f(x)$, red arrows show $G(x)u^*(x)$, and the color
    map shows the magnitude of the input $u^*$.}
  \label{fig:eg12}
\end{figure}

\section{Points of Discontinuity of The Min-Norm Safe
  Controller}\label{sec:discontinuity}

Here we characterize the points of (dis)continuity of the min-norm
controller~$u^*$ in $\Cc $. This is motivated by the fact that
if $u^*$ goes unbounded when approaching a point in $\Cc $,
then it is discontinuous at it. Therefore, the results of this section
are a stepping stone towards the identification of conditions for
(un)boundedness of~$u^*$.

%
%

%

\begin{lemma}[Points of discontinuity of $u^*$ in
  $\Cc $]\label{lem:contOfMinnorm}
  Let $h$ be a CBF for a system \eqref{eq:mimoSystem} with a Lipschitz gradient and an
  associated Lipschitz class-$\kappa$ function $\alpha$, and let $u^*$ be the
  min-norm controller given by \eqref{eq:minnorm}. Define
  $ \Zc_{h,\alpha} \triangleq \{x\in \Cc \; |\; \nabla h(x)f(x) + \alpha(h(x)) = 0 = \|\nabla h(x)G(x)\|
  \}$. Then, $u^*$ is locally Lipschitz on
  $\Cc  \setminus \Zc_{h,\alpha}$.
\end{lemma}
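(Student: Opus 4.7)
The plan is to collapse the piecewise definition of $u^*$ into a single expression involving the positive-part function and then classify the point $\bar x$ according to the sign of the ``safety margin.'' Introduce the shorthand $a(x) \triangleq \nabla h(x) f(x) + \alpha(h(x))$ and $b(x) \triangleq \nabla h(x) G(x)$. Under the standing hypotheses of the lemma ($\nabla h$ Lipschitz, $\alpha$ Lipschitz, $f$ and $G$ locally Lipschitz), both $a$ and $b$ are locally Lipschitz on $\Xc$. Then observe that wherever $\|b(x)\| \neq 0$ the formula \eqref{eq:minnorm} can be rewritten in a single line as
\begin{align*}
  u^*(x) = \frac{\max\{-a(x),0\}}{\|b(x)\|^2}\, b(x)^T ,
\end{align*}
since the ``$\max$'' equals $0$ when $a(x) \geq 0$ (yielding $u^*=0$ as on $\Dc_+$) and equals $-a(x)$ when $a(x)<0$ (reproducing the $\Dc_-$ branch). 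This unifies the formula across the interface $\{a=0\}$ and is the essential device of the proof.

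Next, I would fix $\bar x \in \Cc \setminus \Zc_{h,\alpha}$ and split into three cases.
\textbf{Case (i):} $a(\bar x) > 0$. By continuity of $a$, there is a neighborhood $\Nc$ of $\bar x$ on which $a>0$, so $\Nc \cap \Cc \subseteq \Dc_+$ and $u^* \equiv 0$ on $\Nc \cap \Cc$; Lipschitzness is trivial.
\textbf{Case (ii):} $a(\bar x) < 0$. Since $\Cc \subseteq \Dc$, the CBF condition \eqref{eq:cbfConditionD} must admit some $u$ at $\bar x$, and $a(\bar x)<0$ forces $\|b(\bar x)\| > 0$. By continuity, $a<0$ and $\|b\|>0$ on a neighborhood, so the $\Dc_-$ branch of \eqref{eq:minnorm} applies and is a quotient/product of locally Lipschitz functions with denominator bounded away from $0$.
\textbf{Case (iii):} $a(\bar x)=0$ and $\|b(\bar x)\|>0$ (the only remaining possibility once we exclude $\Zc_{h,\alpha}$). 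By continuity, $\|b(x)\|$ stays bounded away from $0$ on a neighborhood of $\bar x$, and the unified $\max$-formula above applies. Since $t \mapsto \max\{t,0\}$ is $1$-Lipschitz, $\max\{-a(x),0\}$ is locally Lipschitz; multiplying by $b(x)^T/\|b(x)\|^2$ preserves local Lipschitzness and therefore $u^*$ is locally Lipschitz near $\bar x$.

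The only substantive obstacle is Case (iii): the naive piecewise formula branches at $\{a=0\}$, and without a unifying rewrite one would have to patch Lipschitz estimates across the two branches and verify that the slopes match. The $\max\{-a,0\}$ trick dispatches this cleanly by turning the branching into a Lipschitz composition, after which the argument reduces to standard calculus with locally Lipschitz functions. Finally, I would note that Case (i) together with the argument of Case (ii)--(iii) applied to boundary points yields Lemma~\ref{lem:LghNeq0} as a corollary, because under its hypothesis the set $\Zc_{h,\alpha}$ is empty on $\partial\Cc$, and a direct use of the CBF condition rules out the offending interior configurations.
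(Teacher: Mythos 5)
Your proposal is correct and follows essentially the same route as the paper: the paper likewise observes that the CBF condition forces $\|\nabla h(x)G(x)\|\neq 0$ on $\Dc_-$, and handles the interface $\{\nabla h(x)f(x)+\alpha(h(x))=0\}$ away from $\Zc_{h,\alpha}$ by rewriting $u^*$ through the locally Lipschitz ramp function $\omega(r)=\max\{-r,0\}$, which is exactly your $\max\{-a(x),0\}$ device. Your three-case split is just a more explicit organization of the same argument, so there is nothing substantive to add.
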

\begin{proof}
  The proof is an extension of the proof of~\cite[Thm. 8]{XX-PT-JWG-ADA:15}. Note that Since $h$ is a CBF,~\eqref{eq:cbfConditionD} is satisfied for $\Dc = \Cc $
  and therefore $\| \nabla h(x)G(x)\| \neq 0$, for all $x \in \Dc_-$.
  Thus, on $\Dc_-$, $u^*$ is a quotient with a non-zero Lipschitz
  denominator and a Lipschitz numerator. Hence, both expressions in
  the piecewise definition of $u^*$ in~\eqref{eq:minnorm} are locally
  Lipschitz on their respective domains $\Dc_+$ and $\Dc_-$. It
  remains to prove that $u^*$ is locally Lipschitz with respect to
  $\Cc $ at all the points in the boundary between $\Dc_+$ and
  $\Dc_-$ that are not in $\Zc_{h,\alpha}$. For a point $x$ in
  the boundary between $\Dc_+$ and $\Dc_-$,
  $\nabla h(x)f(x) + \alpha(h(x)) = 0$. If at such a point
  $\|\nabla h(x)G(x)\| \neq 0$ (i.e.,
  $x \notin \Zc_{h,\alpha}$), then there is a neighborhood
  $\Nc$ of $x$ such that $\|\nabla h(y)G(y)\| \neq 0$ for all
  $y \in \Nc$. Thus
  $u^*(x) = \omega(\frac{\nabla h(x)f(x) + \alpha(h(x))}{\|\nabla
    h(x)G(x)\|})(\nabla h(x)G(x))^T$ for $x \in \Nc$, where
  \begin{align*}
    \omega(r) = \begin{cases}
      0, & r \geq 0 ,\\
      -r, & r < 0 ,
    \end{cases}
  \end{align*}
  which is locally Lipschitz on $\real$.  That $u^*$ is locally
  Lipschitz at $x$ follows from the facts that the composition and
  product of locally Lipschitz functions is locally Lipschitz, and the
  quotient of locally Lipschitz functions is locally Lipschitz
  provided that the denominator is not zero.
  %
\end{proof}

Lemma~\ref{lem:contOfMinnorm} can be seen as an extension of previous
results, cf.~\cite[Thm. 8]{XX-PT-JWG-ADA:15}, establishing local
Lipschitzness of $u^*$ by assuming uniform relative degree 1 of
$h$. If this is the case, then $\Zc_{h,\alpha}$ is empty and
thus $u^*$ is locally Lipschitz on~$\Cc $.
Given the dependency of $\Zc_{h,\alpha}$ on $h$ and $\alpha$,
one might consider the possibility that a suitable choice of these
functions might eliminate the potential points of discontinuity. The
following results rule this out.

\begin{lemma}[Discontinuity Points Are Independent of
  $\alpha$]\label{lem:ZinBoundary}
  Let $h$ be a CBF. Then there exists an extended class-$\kappa$
  function $\alpha$ that validates the CBF condition
  \eqref{eq:cbfConditionD} and such that
  $\Zc_{h,\alpha} \subseteq \partial \Cc$. Moreover, let
  $\alpha_1$ and $\alpha_2$ be two extended class-$\kappa$ functions
  that validate the CBF definition for $h$. Then
  $\Zc_{h,\alpha_1} \cap \partial \Cc  = \mathcal
  Z_{h,\alpha_2} \cap \partial \Cc $.
\end{lemma}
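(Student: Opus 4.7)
My plan is to handle the two claims separately, treating the second (\emph{$\alpha$-invariance on the boundary}) first since it is essentially immediate, and then using a controlled perturbation argument for the first (\emph{existence of an $\alpha$ with $\Zc_{h,\alpha}\subseteq \partial \Cc$}).

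For the second claim, I would exploit the fact that every extended class-$\kappa$ function $\alpha$ satisfies $\alpha(0)=0$ by definition, together with $h(x)=0$ for every $x\in\partial \Cc$. Substituting into the definition of $\Zc_{h,\alpha}$, the condition $\nabla h(x)f(x)+\alpha(h(x))=0$ on $\partial \Cc$ reduces to $\nabla h(x)f(x)=0$, which no longer involves $\alpha$. Hence $\Zc_{h,\alpha}\cap\partial \Cc=\{x\in\partial \Cc:\nabla h(x)f(x)=0=\|\nabla h(x)G(x)\|\}$, a description that is independent of the chosen $\alpha$. Applying this to $\alpha_1$ and $\alpha_2$ yields the equality.

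For the first claim, I would first observe that, without loss of generality, one may take $\Dc=\Cc$ in Definition~\ref{def:CBFs}: if $h$ is a CBF via some larger $\Dc'\supseteq \Cc$ with class-$\kappa$ function $\alpha_0$, then the same $\alpha_0$ validates~\eqref{eq:cbfConditionD} when restricted to $\Cc$. I would then define the perturbation $\alpha(r)=\alpha_0(r)+\epsilon r$ for a fixed $\epsilon>0$, on the same domain as $\alpha_0$. This $\alpha$ remains extended class-$\kappa$ since it is a sum of strictly increasing functions that vanishes at $0$. Next, $\alpha$ still validates the CBF condition on $\Cc$: for every $x\in\Cc$ we have $h(x)\geq 0$, hence $\alpha(h(x))\geq \alpha_0(h(x))$, so any $u$ witnessing~\eqref{eq:cbfConditionD} for $\alpha_0$ also witnesses it for $\alpha$.

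Finally, to establish $\Zc_{h,\alpha}\subseteq \partial \Cc$, I would argue by contradiction. Suppose $x\in \Zc_{h,\alpha}\cap \interior{\Cc}$. Then $h(x)>0$, $\|\nabla h(x)G(x)\|=0$, and $\nabla h(x)f(x)+\alpha(h(x))=0$. Applying the CBF condition with $\alpha_0$ at this $x$ (where the $u$-term must vanish) forces $\nabla h(x)f(x)+\alpha_0(h(x))\geq 0$, which yields $\nabla h(x)f(x)+\alpha(h(x))\geq \epsilon h(x)>0$, contradicting membership in $\Zc_{h,\alpha}$. I expect the main subtlety to be the reduction to $\Dc=\Cc$ and the verification that $r\mapsto\alpha_0(r)+\epsilon r$ is still a valid class-$\kappa$ function that relaxes~\eqref{eq:cbfConditionD} on $\Cc$; both follow from strict monotonicity and from $h\geq 0$ on $\Cc$, so I do not anticipate a significant obstacle beyond these bookkeeping steps.
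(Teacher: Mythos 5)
Your proposal is correct and follows essentially the same strategy as the paper: inflate a validating $\alpha_0$ on $\Cc$ (the paper uses an arbitrary $\bar\alpha$ with $\bar\alpha(r)>\alpha_0(r)$ for $r>0$, you use the concrete choice $\alpha_0(r)+\epsilon r$) so that at any interior point the CBF inequality with $\alpha_0$, combined with $\|\nabla h(x)G(x)\|=0$, forces $\nabla h(x)f(x)+\alpha(h(x))>0$ and excludes membership in $\Zc_{h,\alpha}$, while the boundary claim reduces to $\alpha_1(0)=\alpha_2(0)=0$. Your explicit reduction to $\Dc=\Cc$ is a welcome bit of extra care that the paper leaves implicit.
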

\begin{proof}
  We prove that if $\alpha$ validates Definition~\ref{def:CBFs} for
  $h$, then any class-$\kappa$ function $\bar \alpha$ that satisfies
  $\bar \alpha(r)>\alpha(r)$ for all $r>0$ validates
  Definition~\ref{def:CBFs} for $h$ and gives
  $\Zc_{h,\bar {\alpha}} \cap \interior{\Cc } = \emptyset$.  That
  $\bar {\alpha}$ validates the CBF condition \eqref{eq:cbfConditionD}
  is immediate. Now let $\bar x \in \interior{\Cc }$ be such that
  $\nabla h(\bar x)f(\bar x) + \bar {\alpha}(h(\bar x)) = 0$. We show
  that $\|\nabla h(\bar x) G(\bar x) \|\neq 0$ and thus
  $\bar x \notin \Zc_{\bar {\alpha},h}$. Since
  $\bar \alpha(r)>\alpha(r)$ for $r>0$,
  $\nabla h(\bar x)f(\bar x) + {\alpha}(h(\bar x)) < 0$ because
  $h(\bar x) > 0$ as $\bar x\in \interior{\Cc }$. But $\alpha$
  validates condition \eqref{eq:cbfConditionD} and thus
  $\|\nabla h(\bar x) G(\bar x) \|\neq 0$. The proof of the last claim
  in the statement is immediate from the fact that
  $\alpha_1(h(x)) = \alpha_2(h(x)) = 0$ on $\partial \Cc $.
\end{proof}

If we thus define
\begin{align}
  \Zc_h \triangleq \{x\in \partial \Cc  \; |
  \; \nabla h(x)f(x) = \|\nabla h(x)G(x)\| = 0\}, \label{eq:Z}
\end{align}
then Lemmas~\ref{lem:contOfMinnorm} and~\ref{lem:ZinBoundary} justify
stating that $u^*$ is continuous on
$\Cc  \setminus \Zc_h$. This shows that $u^*$ is
continuous on $\interior{\Cc }$ and that the possible points of
discontinuity are independent of the choice of~$\alpha$.

\begin{lemma}[Discontinuity Points Are Independent of
  $h$]\label{lem:ZindependentOnH}
  Let $h_1$, $h_2 \in C^1$ be CBFs with the same superlevel set
  $\Cc $. Then, $\Zc_{h_1} = \Zc_{h_2}$.
\end{lemma}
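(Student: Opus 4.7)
The plan is to show that at each point on the common boundary $\partial \Cc$, the gradients of the two CBFs $h_1$ and $h_2$ are (nonzero) parallel vectors, and then observe that the defining conditions of $\Zc_h$ depend only on the direction of $\nabla h(x)$, not on its magnitude.

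First, I would fix $x \in \partial \Cc$ and argue that $\nabla h_1(x)$ and $\nabla h_2(x)$ span the same one-dimensional subspace of $(\real^n)^*$. By the CBF definition, both gradients are nonzero at $x$. Since $h_1, h_2 \in C^1$ with nonvanishing gradients on $\partial \Cc$, the set $\partial \Cc = h_1^{-1}(0) = h_2^{-1}(0)$ is a $C^1$ embedded hypersurface with a well-defined tangent space $T_x \partial \Cc$ of dimension $n-1$. For any $C^1$ curve $\gamma : (-\epsilon, \epsilon) \to \partial \Cc$ with $\gamma(0) = x$, differentiating $h_i(\gamma(t)) = 0$ at $t = 0$ yields $\nabla h_i(x)\, \gamma'(0) = 0$ for $i = 1, 2$. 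Hence both $\nabla h_1(x)$ and $\nabla h_2(x)$ annihilate the $(n-1)$-dimensional subspace $T_x \partial \Cc$, so each spans the one-dimensional orthogonal complement. This gives a nonzero scalar $\lambda(x) \in \real$ with $\nabla h_1(x) = \lambda(x) \nabla h_2(x)$.

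Next, I would use this parallelism to equate the two sets. For $x \in \partial \Cc$, the conditions $\nabla h_i(x) f(x) = 0$ and $\|\nabla h_i(x) G(x)\| = 0$ are clearly invariant under multiplication of $\nabla h_i(x)$ by any nonzero scalar. Therefore
\begin{align*}
  \nabla h_1(x) f(x) = 0 &\iff \nabla h_2(x) f(x) = 0, \\
  \|\nabla h_1(x) G(x)\| = 0 &\iff \|\nabla h_2(x) G(x)\| = 0,
\end{align*}
so $x \in \Zc_{h_1}$ iff $x \in \Zc_{h_2}$, which gives $\Zc_{h_1} = \Zc_{h_2}$.

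The only nontrivial step is the parallelism of the gradients, which rests on identifying $\partial \Cc$ as a $C^1$ hypersurface with a canonical normal line. This is a standard consequence of the regular value theorem applied to either $h_i$, and I expect no real obstacle here beyond invoking it cleanly. The remainder of the argument is a direct algebraic check.
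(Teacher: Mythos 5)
Your proof is correct and takes essentially the same route as the paper's: both establish that $\nabla h_1(x)$ and $\nabla h_2(x)$ are nonzero and parallel on $\partial \Cc$ by identifying the common $(n-1)$-dimensional tangent space of the hypersurface $\partial \Cc$ with the kernel of each gradient, and then observe that the defining conditions of $\Zc_h$ are invariant under nonzero scaling of the gradient. The only cosmetic difference is that the paper cites the regular value theorem and a tangent-space characterization where you differentiate $h_i$ along curves in $\partial \Cc$; the substance is identical.
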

\begin{proof}
  By Definition~\ref{def:CBFs}, $\nabla h_i(x) \neq 0$,
  $i \in \{1,2\}$ on $\partial \Cc $. By~\cite[Thm. 5.1]{MS:95},
  both $h_1 = 0$ and $h_2 = 0$ define the same differentiable manifold
  $\partial \Cc $ of dimension $n-1$ embedded in $\real^n$. By~\cite[Thm. 3.15]{NB:23}, the tangent space $T_x$ of this
  manifold at a point $x$ is given by
  $T_x = \text{kernel}(\nabla h_1(x)) = \text{kernel}(\nabla
  h_2(x))$. Thus $\nabla h_1(x)$ and $\nabla h_2(x)$ are parallel, and
  the result follows using the definition of~$\Zc_h$.
\end{proof}

Lemma~\ref{lem:ZindependentOnH} shows that $\Zc_h$ is
associated to the set $\Cc $ and is independent of the CBF that
has this set as its superlevel set. We thus write $\Zc$ to
denote $\Zc_h$ without loss of generality.

Lemma~\ref{lem:LghNeq0} can now be readily proved: in fact, the
hypotheses there imply that $\Zc$ is empty, and therefore, by
Lemma~\ref{lem:contOfMinnorm}, $u^*$ is continuous on $\mathcal
C$. Now that it is proved that the non-emptiness of the set
$\Zc$ implies potential discontinuity; one might then hope that
boundedness of $u^*$ can be established for a weak CBF $h$ by ensuring
that $\Zc$ is empty. The next result shows that the latter is
never the case.

%
%
\begin{lemma}[Weak CBF Implies Possible
  Discontinuity]\label{lem:Znonempty}
  If $h$ is a weak CBF, then $\Zc$ is nonempty.
\end{lemma}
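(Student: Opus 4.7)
The plan is to prove the contrapositive: if $\Zc = \emptyset$, then $h$ admits an open set $\Dc$ validating Definition~\ref{def:CBFs}, i.e.\ $h$ is a strong CBF. So assume $\Zc = \emptyset$ and fix any $x \in \partial \Cc$. By definition of $\Zc$, either $\|\nabla h(x)G(x)\| \neq 0$ or $\nabla h(x)f(x) \neq 0$, and I would analyze each case separately to produce an open neighborhood of $x$ on which the CBF condition \eqref{eq:cbfConditionD} is satisfiable.

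In the first case, $\|\nabla h(x)G(x)\| \neq 0$. Because $\nabla h$ and $G$ are continuous, the same inequality persists on an open neighborhood $\Nc_x$ of $x$, and on $\Nc_x$ the closed-form control from \eqref{eq:minnorm} satisfies \eqref{eq:cbfConditionD} pointwise. In the second case, $\|\nabla h(x)G(x)\| = 0$, so the CBF condition at $x$ (which holds because $h$ is a CBF on $\Cc$, taking $\Dc = \Cc$ in Definition~\ref{def:CBFs}) reduces, using $h(x) = 0$ and $\alpha(0) = 0$, to $\nabla h(x)f(x) \geq 0$. Combined with $x \notin \Zc$ this gives the strict inequality $\nabla h(x)f(x) > 0$. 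By continuity of $\nabla h \cdot f$, of $h$, and of $\alpha$, the map $y \mapsto \nabla h(y)f(y) + \alpha(h(y))$ is strictly positive on some open neighborhood $\Nc_x$ of $x$, so $u = 0$ satisfies \eqref{eq:cbfConditionD} there.

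Finally, I would assemble the open set
\begin{align*}
  \Dc \;\triangleq\; \interior{\Cc} \,\cup\, \bigcup_{x \in \partial \Cc} \Nc_x,
\end{align*}
which is open, contains $\partial \Cc \cup \interior{\Cc} = \Cc$, and on which \eqref{eq:cbfConditionD} is satisfiable: on $\interior{\Cc}$ this follows from $h$ being a CBF on $\Cc$, and on each $\Nc_x$ from the case analysis above. Hence $h$ is a strong CBF, contradicting the hypothesis that $h$ is weak, so $\Zc \neq \emptyset$.

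The main obstacle is the second case: extracting the strict inequality $\nabla h(x)f(x) > 0$ from the non-strict CBF inequality, and then propagating it to an open neighborhood that may contain points \emph{outside} $\Cc$ (where $h < 0$ and hence $\alpha(h) < 0$). The strictness comes from $x \notin \Zc$ ruling out equality, and propagation to an open neighborhood works because $\alpha(h(\cdot))$ vanishes at $x$ by continuity and $\alpha(0) = 0$, so a sufficiently small neighborhood preserves the sign. Verifying that the resulting $\Dc$ is both open and still contained in $\Xc$ (so that $f$, $G$, $h$ are defined and continuous on it) is routine once each $\Nc_x$ is chosen small enough, since $\Xc$ is itself open.
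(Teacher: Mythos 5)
Your proof is correct, but it takes a genuinely different route from the paper's. The paper argues directly: weakness of $h$ yields, for each $n$, a point $x_n$ outside $\Cc$ but within distance $1/n$ of it at which \eqref{eq:cbfConditionD} fails for every $u$, which forces $\|\nabla h(x_n)G(x_n)\| = 0$ and $\nabla h(x_n)f(x_n) + \alpha(h(x_n)) < 0$; compactness of $\Cc$ then gives a subsequence converging to some $\bar y \in \partial \Cc$, and passing both conditions to the limit (using the CBF inequality on $\Cc$ to pin down $\nabla h(\bar y)f(\bar y) = 0$) places $\bar y$ in $\Zc$. You instead prove the contrapositive: if $\Zc = \emptyset$, every boundary point admits a neighborhood on which \eqref{eq:cbfConditionD} is satisfiable --- via an arbitrarily large control along $(\nabla h\, G)^T$ when $\|\nabla h(x)G(x)\| \neq 0$, and via $u = 0$ when $\|\nabla h(x)G(x)\| = 0$ forces $\nabla h(x)f(x) > 0$ --- and gluing these neighborhoods to $\interior{\Cc}$ yields an open validating set, so $h$ is strong. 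Your argument is purely local, does not invoke compactness of $\Cc$ (the paper needs it for the Bolzano--Weierstrass step), and explicitly exhibits the open domain, which makes transparent that $\Zc = \emptyset$ is in fact equivalent to $h$ being strong. One shared caveat: both proofs implicitly use continuity of $\alpha$ at $0$ (the paper's definition of extended class-$\kappa$ states only strict monotonicity and $\alpha(0)=0$); you need it so that $\alpha(h(y))$ does not jump to a negative value bounded away from zero for $y$ slightly outside $\Cc$ in your second case, and the paper needs it to pass $\nabla h(y_n)f(y_n)+\alpha(h(y_n))<0$ to the limit. This is a standard assumption and not a gap in your argument.
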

\begin{proof}
  Define the sequence of sets
  $\Dc_{n} \triangleq \{x\in \real^n \;|\;
  \dist(x,\Cc ) < 1/n\}$,
  where $\dist(x,\Cc )$ is the distance function from $x$ to set
  $\Cc $, which is continuous,
  cf.~\cite[Thm. 3.1]{EG-DJ:85}. Note that
  $\Cc  \subset \Dc_n$ and $\Dc_{n}$ is open for
  all $n \in \mathbb N$. Since $h$ is a weak CBF, for each
  $ n \in \mathbb N$, there exists
  $x_n \in \Dc_n \setminus \Cc $ such that for all
  $u \in \real^m$ and all class-$\kappa$ functions $\alpha$,
  $\nabla h(x_n)f(x_n) + \alpha(h(x_n)) + \nabla h(x_n) G(x_n)u < 0$.
  This implies that necessarily $\|\nabla h(x_n) G(x_n)\| = 0$ and
  $\nabla h(x_n)f(x_n) + \alpha(h(x_n)) < 0$.
  Consider the sequence $\{x_n\}$. Since $\Cc $ is compact, the
  closure of $\Dc_1$, namely $\bar {\Dc}_1$, is
  compact. Since $\{x_n\} \subseteq \bar {\Dc}_1$, there
  exists, cf.~\cite[Thm. 3.6]{WR:76}, a convergent subsequence of
  $\{x_n\}$, denoted $\{y_n\}$, whose limit is $\bar y$. By the
  definition of $\{y_n\}$, we have $\dist(y_n,\Cc ) \to 0$, and by
  continuity, $\dist(\bar y, \Cc ) = 0$, and so
  $\bar y \in \Cc $. Since $h(y_n) < 0$ for all $n$, it follows
  that $h(\bar y) \leq 0$, and therefore it must be that
  $h(\bar y) = 0$, i.e., $\bar{y} \in \partial \Cc $.
  Continuity and the fact that $\|\nabla h(y_n) G(y_n)\| = 0$ for all
  $n$ implies $\|\nabla h(\bar y) G(\bar y)\| = 0$. Similarly,
  continuity and the fact that
  $\nabla h(y_n)f(y_n) + \alpha(h(y_n)) < 0$ implies that
  $\nabla h(\bar y)f(\bar y) + \alpha(h(\bar y)) = \nabla h(\bar
  y)f(\bar y) \leq 0$.  Since $h$ is a CBF and
  $\bar y \in \Cc $, we have
  $\nabla h(\bar y)f(\bar y) + \alpha(h(\bar y)) = \nabla h(\bar
  y)f(\bar y) \geq 0$. Therefore $\nabla h(\bar y)f(\bar y) = 0$ and
  thus, $\bar y \in \Zc$, implying $\Zc \neq \emptyset$.
\end{proof}

Lemma~\ref{lem:Znonempty} provides an important connection between the
conditions for continuity presented in Section~\ref{sec:Egs}. In fact,
if the CBF is not strong, but weak (i.e., the condition of
Lemma~\ref{lem:strongImpliesBounded} is not met), then
Lemma~\ref{lem:Znonempty} implies that the condition of
Lemma~\ref{lem:LghNeq0} is not satisfied either.

\section{(Un)Boundedness Conditions For The Min-Norm Safe Controller}

This section identifies conditions to determine when the min-norm
controller is bounded.  For a compact safe set $\Cc $, the controller
can go unbounded only if approaching a state at which it is
discontinuous (see e.g., Example~\ref{ex:unboundedEg} for an
illustration). From the exposition in Section~\ref{sec:discontinuity},
we know that the points of discontinuity of the min-norm controller
are contained in $\Zc$, cf.~\eqref{eq:Z}.  The following result
provides computable sufficient conditions for (un)boundedness when
approaching a point in~$\Zc$.

\begin{theorem}[(Un)Boundedness Conditions of Min-Norm
  Controller]\label{thm:mimoBoundedness}
  Let $h \in C^2$ be a CBF with compact superlevel set $\Cc $ and an
  associated $\alpha$ that is differentiable at $0$.  Assume $f$ and
  $G$ are differentiable at $\bar x \in \Zc$ and let $H_h(\bar x)$,
  $J_f(\bar x)$, and $J_{g_i}(\bar x)$ denote the Hessian of $h$ and
  the Jacobians of $f$ and $g_i$, respectively. Consider the linear
  equation
  \begin{align}\label{eq:mimoLinearTest}
    Av
    &=
      \begin{bmatrix}
        c_1
        \\
        c_2
        \\
        \bold{0}
      \end{bmatrix},
  \end{align}
  with $v \in \real^n$, $c_1,c_2 \in \real$. Here, $\bold{0}$ is the
  zero vector in $\real^m$,
  $A \triangleq \begin{bmatrix} \nabla h(\bar x)^T & \beta_f(\bar x) &
    \beta_{G}(\bar x)\end{bmatrix}^T$ and
  \begin{align*}
    \beta_f(x)
    &\triangleq H_h(x)f(x) + (J_{f}^T(x) + \alpha
      '(h(x))I_n)\nabla h(x)^T \in \real^n,
    \\
    \beta_{g_i}(x)
    &\triangleq H_h(x)g_i(x) + J_{g_i}^T(x)\nabla h(x)^T
      \in \real^n,
    \\
    \beta_G(x)
    &\triangleq
      \begin{bmatrix}
        \beta_{g_1}( x)
        & \dots
        &
          \beta_{g_m}( x)
      \end{bmatrix} \in \real^{n \times m}. 
  \end{align*}
  Then, the following statements hold:
  \begin{enumerate}
  \item if \eqref{eq:mimoLinearTest} has a solution $v$ with
    $c_1 \geq 0$ and $c_2 < 0$, then $u^*$ is not bounded as
    $x \to \bar x$ in $\Cc $ from the direction of $v$, i.e.,
    $u^*(\bar x + vt)$ goes unbounded as $t \to 0^+$.
    
  \item if \eqref{eq:mimoLinearTest} does not have any non-trivial
    solution with $c_1 \geq 0$ and $c_2 \leq 0$, then $u^*$ is bounded
    as it approaches $\bar x$ from all possible directions in $\Cc $.
  \end{enumerate}
\end{theorem}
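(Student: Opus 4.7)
The plan is to reduce the analysis to first-order Taylor expansions of the auxiliary functions $\phi(x) \triangleq \nabla h(x)f(x) + \alpha(h(x))$ and $\psi(x) \triangleq \nabla h(x)G(x) \in \real^{1\times m}$. In terms of these, the closed form~\eqref{eq:minnorm} reads $u^*(x) = -\phi(x)\psi(x)^T/\|\psi(x)\|^2$ on $\Dc_-$, so $\|u^*(x)\| = |\phi(x)|/\|\psi(x)\|$ there. A short computation that uses $h \in C^2$, differentiability of $f,G$ at $\bar x$, and differentiability of $\alpha$ at $0$ identifies the three row blocks of $A$ as the gradients $\nabla h(\bar x)$, $\nabla \phi(\bar x) = \beta_f(\bar x)^T$, and $\nabla \psi_i(\bar x) = \beta_{g_i}(\bar x)^T$. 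Hence the condition $Av = [c_1,c_2,\mathbf{0}]^T$ records the directional derivatives $\nabla h(\bar x)v = c_1$, $\nabla \phi(\bar x)v = c_2$, and $\nabla \psi_i(\bar x)v = 0$; crucially, $\bar x \in \Zc$ gives $h(\bar x) = \phi(\bar x) = 0$ and $\psi(\bar x) = 0$, so the Taylor expansion of each of $h, \phi, \psi_i$ along any line through $\bar x$ has no zeroth-order term.

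For part (i), I would set $x(t) \triangleq \bar x + vt$ and use these expansions to obtain $h(x(t)) = c_1 t + o(t)$, $\phi(x(t)) = c_2 t + o(t)$, and $\psi_i(x(t)) = o(t)$, whence $\|\psi(x(t))\| = o(t)$. For small $t > 0$, the condition $c_2 < 0$ makes $\phi(x(t)) < 0$, placing $x(t)$ in $\Dc_-$, so~\eqref{eq:minnorm} yields $\|u^*(x(t))\| \geq (|c_2|t/2)/\|\psi(x(t))\|$, which diverges as $t \to 0^+$. For part (ii), I would argue the contrapositive: given a sequence $x_n \to \bar x$ in $\Cc$ with $\|u^*(x_n)\| \to \infty$---which I may take inside $\Dc_-$ since $u^* = 0$ on $\Dc_+$---I set $t_n \triangleq \|x_n - \bar x\|$ and $v_n \triangleq (x_n - \bar x)/t_n$, and extract a convergent subsequence $v_n \to v$ with $\|v\| = 1$. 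The constraints $h(x_n) \geq 0$ and $\phi(x_n) < 0$ pass to the limit as $c_1 \triangleq \nabla h(\bar x)v \geq 0$ and $c_2 \triangleq \nabla \phi(\bar x)v \leq 0$, while comparing the bounded rate $|\phi(x_n)|/t_n \to |c_2|$ against $\|u^*(x_n)\| = |\phi(x_n)|/\|\psi(x_n)\| \to \infty$ forces $\|\psi(x_n)\|/t_n \to 0$, hence $\nabla \psi_i(\bar x)v = 0$ for every $i$. This produces a nontrivial solution of~\eqref{eq:mimoLinearTest} with $c_1\geq 0$ and $c_2\leq 0$, contradicting the hypothesis.

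The subtle point that I expect will need the most care is the borderline case $c_1 = 0$ in part (i): the first-order expansion $h(x(t)) = o(t)$ does not by itself certify that the probe points $x(t) = \bar x + vt$ actually lie in $\Cc$ (or even in the broader domain $\Dc$) for small $t > 0$, so either one invokes second-order information on $h$ along $v$, or one reads the statement as asserting unboundedness of $u^*$ along the portion of the line $\bar x + vt$ that sits inside $\Dc_-$ (which is in any case the only region where the closed form~\eqref{eq:minnorm} can become singular). Aside from this domain technicality, the remaining work is mechanical: the identification of the row blocks of $A$ as gradients, and the standard compactness argument to extract the limiting direction~$v$ in part~(ii).
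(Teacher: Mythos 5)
Your proposal is correct, but it follows a genuinely different route from the paper's proof. For part (i), the paper sets $N(t)=\phi(\bar x+vt)$ and $D(t)=\|\psi(\bar x+vt)\|$ and applies L'H\^opital's rule, which forces it to first establish (via H\"older's inequality and the sandwich theorem) that $\lim_{t\to 0^+}D'(t)=0$; your first-order Taylor expansion ($|\phi(x(t))|\ge |c_2|t/2$ versus $\|\psi(x(t))\|=o(t)$, the latter because $\psi_i(\bar x)=0$ and $\nabla\psi_i(\bar x)v=0$) reaches the same conclusion without needing the limit of the derivative of the denominator to exist. The difference is more pronounced in part (ii): the paper's contrapositive runs through Lemma~\ref{lem:halfThm1} (itself a case analysis over oscillating signs of $N$, using a generalized L'H\^opital rule and the Cauchy mean value theorem) and Lemma~\ref{lem:sequences} to convert $D'(\bar t_i)\to 0$ into $v^T\beta_G(\bar x)=\mathbf{0}^T$, whereas you avoid derivatives of $D$ altogether via the algebraic identity $\|\psi(x_n)\|/t_n=\bigl(|\phi(x_n)|/t_n\bigr)/\|u^*(x_n)\|$, whose numerator converges to $|c_2|$ and whose denominator diverges, so that $\|\psi(x_n)\|/t_n\to 0$ and differentiability of each $\psi_i$ at $\bar x$ (where it vanishes) yields $\nabla\psi_i(\bar x)v=0$ directly. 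This is both shorter and slightly stronger, since your normalized-sequence argument covers arbitrary sequences in $\Cc$ approaching $\bar x$, not only radial approaches $\bar x+vt$ as in the paper. The caveat you flag about $c_1=0$ (the probe line may exit $\Cc$, or even $\Dc$, so the closed form~\eqref{eq:minnorm} need not apply along the literal segment) is present in the paper's proof as well, which simply declares tangential directions ``valid directions of approach''; your reading of statement (i) as unboundedness along the portion of the line in $\Dc_-$ is the same resolution the paper implicitly adopts, so no additional gap is introduced.
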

\begin{proof}
  The proof proceeds by examining the limit
  $\limsup_{t \to 0} \|u^*(\bar x + vt)\|$ for
  $v \in \real^n $. In doing so, we face the challenge that $u^*$ is
  given by a piecewise expression that is generally discontinuous
  at~$\bar x$. In addition, when computing the limit, one finds an
  indeterminate form of the type $0/0$. This leads us to the use of a
  particular form of L'H\^opital's rule~\cite{WR:76} that can handle
  the discontinuous piecewise expression and the presence of the
  $\limsup$.

  For brevity, we use $\bar x_t \triangleq \bar x + vt$,
  $h_G(t) \triangleq \nabla h(\bar x_t)G(\bar x_t)$,
  $N(t) \triangleq \nabla h(\bar x_t)f(\bar x_t) + \alpha(h(\bar
  x_t))$, and $D(t) \triangleq \|h_G(t)\|$. According to~\eqref{eq:U},
  $\|u^*(\bar x_t)\| = \frac{-N(t)}{D(t)}$ for $\bar x_t \in
  \Dc_-$. 

  (i) Let $v$ be a solution of \eqref{eq:mimoLinearTest} with
  $c_1 \geq 0$ and $c_2 < 0$. Because of the first row
  of \eqref{eq:mimoLinearTest}, we have that
  $\dt h(\bar x_t) = \nabla h(\bar x)v = c_1 \geq 0$. If
  $\nabla h(\bar x)v > 0$, then by continuity,
  $\nabla h(\bar x_t) > 0$ for small enough $t$. Thus
  by~\cite[Thm. 5.11]{WR:76}, $h(\bar x_t) > 0$, i.e.,
  $\bar x_t \in \Cc $, for small enough $t$. If
  $\nabla h(\bar x)v = 0$, then $v$ is tangential to $\mathcal
  C$. Hence $\bar x_t$ approaches $\bar x$ from within $\Cc $ or
  tangentially to it, meaning that $v$ is a valid direction of
  approach to consider.  The second row of~\eqref{eq:mimoLinearTest}
  ensures that $\dt N(t)|_{t = 0^+} = v^T\beta_f(\bar x) =c_2 < 0$,
  which again by~\cite[Thm. 5.11]{WR:76} proves that $N(t) < 0$, i.e.,
  $\bar x_t \in \Dc_-$ by \eqref{eq:U}, for sufficiently small $t$.
  Hence,
  $ \lim_{t \to 0^+}\|u^*(\bar x_t)\| = \lim_{t \to 0^+}
  \frac{-N(t)}{D(t)}$. Direct evaluation of this expression at $t = 0$
  (where $\bar x_t = \bar x$) yields an indeterminate form of the type
  $0/0$.
  We therefore resort to L'H\^opital's rule~\cite[Thm. 5.13]{WR:76},
  which requires the existence of the limit of the derivative of the
  numerator $-N(t)$ and denominator $D(t)$. For the numerator, we have
  already established $\lim_{t\to 0^+}\dt N(t) = c_2$.  As for
  the denominator, it is the norm of the differentiable function
  $h_G(t)$, and its derivative exists at $t$ where $h_G(t) \neq
  0$. But since $\bar x_t \in \Dc_-$ for small enough $t$, the CBF
  condition \eqref{eq:cbfConditionD} ensures that $h_G(t) \neq 0$ for
  sufficiently small $t$. Thus, the derivative of the denominator
  exists for sufficiently small $t>0$. A proof of the existence of the
  limit of this derivative
  $\lim_{t \to 0^+}\dt(D(t)) = \lim_{t \to 0^+} v^T
  \beta_G(\bar x_t) \frac{h_G(t)}{\|h_G(t)\|}$ follows. By
  H{\"o}lder's inequality,
  \begin{align*}
    \Big | \frac{v^T \beta_G(\bar x_t)  h_G(t)}{\|h_G(t)\|} \Big |
    \! \leq \! \|v^T \beta_G(\bar x_t) \|
    \frac{\|h_G(t)\|}{\|h_G(t)\|} \!=\! \| 
    v^T \beta_G(\bar x_t) \| .
  \end{align*}
  Hence, using the last $m$ rows of~\eqref{eq:mimoLinearTest}, the
  assumption of continuous differentiability, and the sandwich theorem
  for limits~\cite[Thm. 3.3.3]{HHS:03},
  $\lim_{t \to 0^+}\dt D(t) = 0$. By L'H\^opital,
  $\lim_{t \to 0^+}\|u^*(\bar x_t)\| = \lim_{t \to 0^+}
  \frac{-N(t)}{D(t)} = \lim_{t \to 0^+} \frac{-N'(t)}{D'(t)} =
  \infty$.

  (ii) We prove the contrapositive: assume there exists a vector $v$
  such that $\bar x_t = \bar x + vt$ approaches $\bar x$ from within
  $\Cc $ or tangent to it as $t \to 0^+$ and
  $\limsup_{t \to 0^+}\|u^*(\bar x_t)\| = \infty$, and let us show
  that then $v$ solves \eqref{eq:mimoLinearTest} with $c_1 \geq 0$ and
  $c_2 \leq 0$. 
  Note that $\nabla h(\bar x)v \geq 0$, since otherwise, under the
  theorem assumptions, for sufficiently small $t$,
  $\bar{x}_t \notin \Cc $, i.e., $\bar{x}_t$ would approach $\bar{x}$
  from outside $\Cc $, which is a contradiction.  This ensures the
  satisfaction of the first row
  in~\eqref{eq:mimoLinearTest}. Similarly, if
  $v^T\beta_f(\bar x) > 0$, then under the theorem assumptions, for
  sufficiently small $t$, $\bar x_t \in \Dc_+$ and thus
  $\limsup_{t \to 0^+}u^*(\bar x_t) = 0$, which contradicts
  $\limsup_{t \to 0^+}\|u^*(\bar x_t)\| = \infty$.  This ensures the
  satisfaction of the second row in
  \eqref{eq:mimoLinearTest}. According to Lemma~\ref{lem:halfThm1},
  there exists a sequence $\{\bar t_i\} \to 0^+$ with
  $\{\bar x_{\bar t_i}\} \subset \Dc_-$ such that
  $D'(\bar t_i) = v^T \beta_G(\bar x_{\bar t_i}) \frac{h_G(\bar
    t_i)}{\|h_G(\bar t_i)\|} \to 0$. It remains to show that this
  implies $v^T\beta_G(\bar x) = \bold 0^T$. We reason by contradiction
  and assume $v^T\beta_G(\bar x) \neq \bold 0^T$. Without loss of
  generality, we can assume that the limit of
  $\frac{h_G(\bar t_i)}{\|h_G(\bar t_i)\|}$, denoted
  $\zeta \in \real^n$, exists (this can be done because
  $\{ \frac{h_G(\bar t_i)}{\|h_G(\bar t_i)\|} \}$ is a sequence from
  the set of unit vectors in $\real^n$, which is compact, so there
  exists a convergent subsequence~\cite[Thm. 3.6]{WR:76}). This and
  the continuity of $\beta_G$ imply that
  $D'(\bar t_i) \to v^T\beta_G(\bar x)\zeta = 0$. Without loss of
  generality, assume
  $\frac{\|h_G(\bar t_i)\|}{\|h_G(\bar t_{i+1})\|} \to \infty$ (that
  this does not undermine generality is shown by
  Lemma~\ref{lem:sequences}(i)). Now, Lemma~\ref{lem:sequences}(ii)
  applied element-wise gives
  \begin{align}
    \frac{h_G(\bar t_i) - h_G(\bar t_{i+1})}{\|h_G(\bar t_i)\| -
    \|h_G(\bar t_{i+1})\|} \to \zeta. \label{eq:diffRatio} 
  \end{align}
  The sequence in \eqref{eq:diffRatio} can be written as
  \begin{equation}\label{eq:split1}
    \frac{h_G(\bar t_{i+1}) - h_G(\bar t_i)}{\bar t_{i+1} - \bar
      t_{i}} \frac{\bar t_{i+1} - \bar t_{i}}{\|h_G(\bar t_{i+1})\| -
      \|h_G(\bar t_i)\|}. 
  \end{equation}
  Using the continuous differentiability of $\nabla h$ and $G$ at
  $\bar x$, the first term of \eqref{eq:split1}
  \begin{align*}
    \frac{h_G(\bar t_{i+1}) - h_G(\bar t_i)}{\bar t_{i+1} - \bar
    t_{i}} \to \left . \dt(h_G(t)) \right |_{t = 0} 
    = (v^T\beta_G(\bar x) )^T \neq \bold 0,
  \end{align*}
  by hypothesis of contradiction. Consequently, the second term
  in~\eqref{eq:split1} converges to a non-zero scalar, which we denote
  by $a$.  Therefore, $\zeta = a(v^T\beta_G(\bar x))^T$. This implies
  that
  $D(\bar t_i) \to v^T\beta_G(\bar x)\zeta = a\|v^T\beta_G(\bar x)\|^2
  \neq 0$, which is a contradiction.
\end{proof}

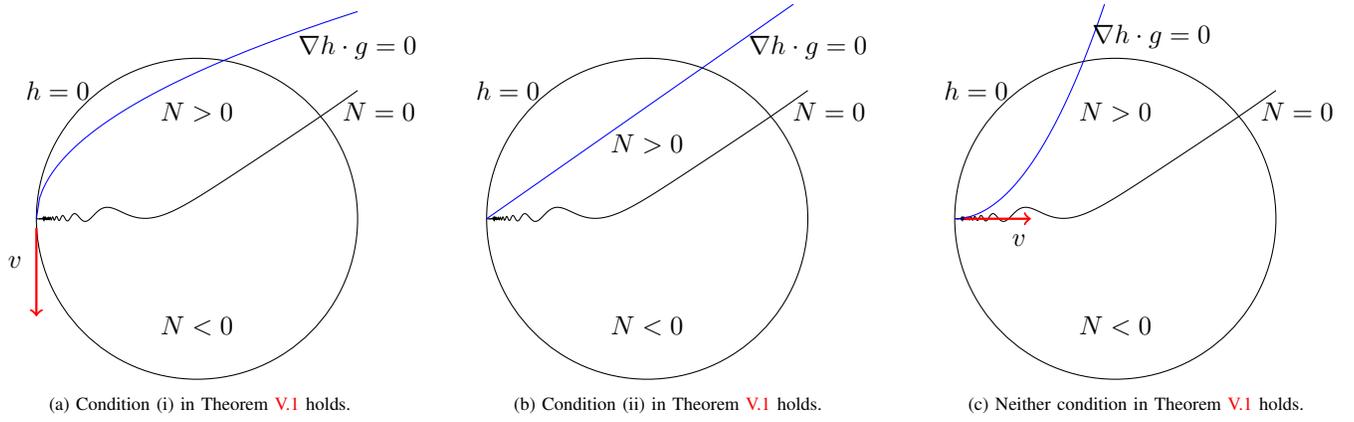
\begin{figure*}[ht!]
  \centering
  \begin{subfigure}{0.3\linewidth}
    \centering
    \begin{tikzpicture}
      \begin{axis}[ticks=none, axis lines=none,
        ymin = -0.1,
        ymax = 0.1,
        unit vector ratio={1 1}
        ]
        \draw (axis cs:0.075,0) circle [blue, radius=0.075];
        \node (source) at (axis cs: 0,0){};
        \node (destination) at (axis cs: 0,-0.05){};
        \draw[->,line width=0.3mm, color = red](source)--(destination);
        \addplot [
        domain=0.0001:0.15,
        samples=1000, 
        color=black,
        ]
        {0.1*(\x*(sin(deg(1/(4*x))))+20*x^2)};
        \addplot [
        domain=0:0.15,
        samples=100,
        color=blue,
        ]
        {0.25*x^(1/2)};
        \node[] at (axis cs: -0.01,-0.02){$v$};
        \node[] at (axis cs: 0.16,0.05){$N = 0$};
        \node[] at (axis cs: 0.15,0.08){$\nabla h\cdot g = 0$};
        \node[] at (axis cs: 0.075,-0.05){$N < 0$};
        \node[] at (axis cs: 0.075, 0.05){$N > 0$};
        \node[] at (axis cs: 0.01, 0.06){$h = 0$};
      \end{axis}
    \end{tikzpicture}
    \caption{\scriptsize Condition (i) in
      Theorem~\ref{thm:mimoBoundedness} holds.}
    \label{fig:2infty}
  \end{subfigure}
  \hfill
  \begin{subfigure}{0.3\linewidth}
    \centering
    \begin{tikzpicture}
      \begin{axis}[ ticks=none, axis lines=none,
        ymin = -0.1,
        ymax = 0.1,
        unit vector ratio={1 1}
        ]
        \draw (axis cs:0.075,0) circle [blue, radius=0.075];
        \addplot [
        domain=0.0001:0.15,
        samples=1000, 
        color=black,
        ]
        {0.1*(\x*(sin(deg(1/(4*x))))+20*x^2)};
        \addplot [
        domain=0:0.15,
        samples=100,
        color=blue,
        ]
        {0.7*x};
        \node[] at (axis cs: 0.16,0.05){$N = 0$};
        \node[] at (axis cs: 0.15,0.08){$\nabla h\cdot g = 0$};
        \node[] at (axis cs: 0.075,-0.05){$N < 0$};
        \node[] at (axis cs: 0.075, 0.035){$N > 0$};
        \node[] at (axis cs: 0.01, 0.06){$h = 0$};
      \end{axis}
    \end{tikzpicture}
    \caption{\scriptsize Condition (ii) in
      Theorem~\ref{thm:mimoBoundedness} holds.}
    \label{fig:bounded}
  \end{subfigure}
  \hfill
  \begin{subfigure}{0.3\linewidth}
    \centering
    \begin{tikzpicture}
      \begin{axis}[ ticks=none, axis lines=none,
        ymin = -0.1,
        ymax = 0.1,
        unit vector ratio={1 1}
        ]
        \draw (axis cs:0.075,0) circle [blue, radius=0.075];
        \addplot [
        domain=0.0001:0.15,
        samples=1000, 
        color=black,
        ]
        {0.1*(\x*(sin(deg(1/(4*x))))+20*x^2)};
        \addplot [
        domain=0:0.15,
        samples=100,
        color=blue,
        ]
        {100*x^4+20*x^2};
        \node[] at (axis cs: 0.16,0.05){$N = 0$};
        \node[] at (axis cs: 0.092,0.085){$\nabla h\cdot g = 0$};
        \node[] at (axis cs: 0.075,-0.05){$N < 0$};
        \node[] at (axis cs: 0.075, 0.05){$N > 0$};
        \node[] at (axis cs: 0.01, 0.06){$h = 0$};
        \node (source) at (axis cs: 0,0){};
        \node (destination) at (axis cs: 0.04,0){};
        \draw[->,line width=0.3mm, color = red](source)--(destination);
        \node[] at (axis cs: 0.03,-0.01){$v$};
      \end{axis}
    \end{tikzpicture}
    \caption{\scriptsize Neither condition in
      Theorem~\ref{thm:mimoBoundedness} holds.}
    \label{fig:undetermined}
  \end{subfigure}
  \caption{Illustration of the conditions identified in
    Theorem~\ref{thm:mimoBoundedness}. In~(a), $v$ solves
    \eqref{eq:mimoLinearTest} with $c_1 = 0$ and $c_2 < 0$. In~(b),
    there is no vector $v$ that points to the region in $\Cc$ where
    $N<0$ and is also tangential to the curve
    $\{\nabla h \cdot g = 0\}$.  In~(c), $v$ solves
    \eqref{eq:mimoLinearTest} with $c_1 > 0$ but with $c_2 = 0$, and
    thus neither condition in Theorem~\ref{thm:mimoBoundedness} is
    satisfied.}
  \label{fig:thm1}
\end{figure*}

Theorem~\ref{thm:mimoBoundedness} provides sufficient
conditions for boundedness of the min-norm controller at a point of
possible discontinuity.  Note that the second row of the the matrix
$A$ in~\eqref{eq:mimoLinearTest} is the gradient of
$\nabla h(x)f(x) + \alpha(h(x))$. Similarly, the $(2+i)^{\text{th}}$
row is the gradient of $\nabla h(x)g_{i}(x)$. Each of the two
equations $\nabla h(x)f(x) + \alpha(h(x)) = 0$ and
$\nabla h(x)g_i(x) = 0$ defines a differentiable $(n-1)$-dimensional
surface embedded in $\real^n$. Thus, the existence of a solution $v$ for
  \eqref{eq:mimoLinearTest} with $c_1 > 0$ and $c_2 < 0$ amounts to
  the existence of a vector that
  \begin{enumerate}
  \item points to the region in $\Cc$ that requires non-zero control
    for safety, and
  \item is perpendicular to the surfaces defined by
    $\nabla h(x)G(x) = 0$.
\end{enumerate}
This provides with a geometric intuition for the conditions identified
in Theorem~\ref{thm:mimoBoundedness}. Figure~\ref{fig:thm1}(a)-(b)
illustrates them for a generic two-dimensional single-input system.

We note that condition (ii) (with $c_2 \le 0$) in
Theorem~\ref{thm:mimoBoundedness} is \emph{almost} a negation of
condition (i) (with $c_2<0$).  This shows that (i) is almost a
sufficient and necessary condition for unboundedness of $u^*$. The gap
between both conditions stems from the fact that L'H\^opital's rule is
indeterminate when both derivatives of the numerator and the
denominator approach $0$. A geometric interpretation of this situation
is depicted in Figure~\ref{fig:undetermined}.


\begin{corollary}[Condition for Boundedness of Min-Norm Controller on
  $\Cc $]
  If condition (ii) in Theorem~\ref{thm:mimoBoundedness} holds for all
  $\bar x \in \Zc$, then $u^*$ is bounded on $\Cc $.
\end{corollary}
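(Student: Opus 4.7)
The plan is to combine the local boundedness of $u^*$ guaranteed at every point of $\Cc$ with the compactness of $\Cc$ via a standard Heine--Borel argument. Locally, one considers two cases according to whether a point lies outside $\Zc$ (where Lemma~\ref{lem:contOfMinnorm} applies) or inside $\Zc$ (where the hypothesis together with Theorem~\ref{thm:mimoBoundedness}(ii) applies).

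At any $x \in \Cc \setminus \Zc$, Lemma~\ref{lem:contOfMinnorm}, after the identification of $\Zc_{h,\alpha}$ with $\Zc$ provided by Lemmas~\ref{lem:ZinBoundary} and~\ref{lem:ZindependentOnH}, yields that $u^*$ is locally Lipschitz with respect to $\Cc$. Hence there exists an open neighborhood $\Nc_x$ of $x$ on which $\|u^*\|$ is bounded on $\Nc_x \cap \Cc$. At any $\bar x \in \Zc$, the hypothesis combined with Theorem~\ref{thm:mimoBoundedness}(ii) asserts that $u^*$ remains bounded along every direction of approach to $\bar x$ from within $\Cc$. I would promote this directional control to a neighborhood bound by contradiction: if no such neighborhood existed, pick a sequence $\{x_n\} \subset \Cc$ with $x_n \to \bar x$ and $\|u^*(x_n)\| \to \infty$; by compactness of the unit sphere, the normalized displacements $v_n \triangleq (x_n - \bar x)/\|x_n - \bar x\|$ admit a subsequence converging to some unit vector $v$. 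Using $h \in C^1$ and $x_n \in \Cc$, one checks $\nabla h(\bar x) v \geq 0$, so $v$ is a valid approach direction along which $u^*$ is unbounded, contradicting Theorem~\ref{thm:mimoBoundedness}(ii).

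Having covered each $y \in \Cc$ by an open neighborhood $\Nc_y$ on which $\|u^*\|$ is bounded in $\Nc_y \cap \Cc$, I would apply the Heine--Borel theorem to extract a finite subcover $\Nc_{y_1}, \ldots, \Nc_{y_k}$ and take the maximum of the corresponding $k$ local bounds, yielding the desired global bound on $\Cc$.

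The main obstacle is precisely the bridging step at $\bar x \in \Zc$: Theorem~\ref{thm:mimoBoundedness}(ii) is stated for straight-line approaches $\bar x + vt$, while neighborhood boundedness requires uniform control over arbitrary sequences in $\Cc$ converging to $\bar x$. The sequential compactness argument above supplies the bridge, but it must carefully verify that the limiting direction $v$ qualifies as a valid approach direction in the sense of the theorem, essentially reusing the first-order Taylor reasoning via~\cite[Thm.~5.11]{WR:76} that opens the proof of Theorem~\ref{thm:mimoBoundedness}(ii).
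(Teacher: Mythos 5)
The paper states this corollary without proof, so there is no official argument to compare against; judging your proposal on its own terms, the covering-and-finite-subcover skeleton and the treatment of points in $\Cc \setminus \Zc$ via Lemma~\ref{lem:contOfMinnorm} are fine, but the bridging step at $\bar x \in \Zc$ --- which you yourself correctly identify as the crux --- does not actually close. Theorem~\ref{thm:mimoBoundedness}(ii) (via the contrapositive proved in the paper) gives, for each fixed direction $v$, that $\limsup_{t\to 0^+}\|u^*(\bar x + tv)\| < \infty$. In your contradiction argument you take $x_n \to \bar x$ with $\|u^*(x_n)\|\to\infty$ and extract $v_n \triangleq (x_n-\bar x)/\|x_n-\bar x\| \to v$; but each $x_n$ lies on the ray in direction $v_n$, not on the ray in direction $v$, so the blow-up of $\|u^*(x_n)\|$ says nothing about $\limsup_{t\to 0^+}\|u^*(\bar x + tv)\|$. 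There is no contradiction with Theorem~\ref{thm:mimoBoundedness}(ii): a function can be bounded along every ray emanating from $\bar x$ while being unbounded on every neighborhood of $\bar x$, because the ray-wise bounds $M_v$ need not be uniform in $v$. The standard model is a function that blows up along a parabola tangent at $\bar x$ to some line but remains bounded on each individual line through $\bar x$; the discussion around Figure~\ref{fig:undetermined}, where the locus $\{\nabla h \cdot g = 0\}$ is tangent to an admissible direction, shows this tangential geometry is exactly the regime the theorem struggles with, so the scenario cannot be dismissed.

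To repair the argument you would need one of two things: (a) a uniform version of Theorem~\ref{thm:mimoBoundedness}(ii), i.e., a bound $\sup_{v}\sup_{0<t<\epsilon}\|u^*(\bar x + tv)\| < \infty$ with $\epsilon$ and the bound independent of $v$ ranging over the compact set of admissible unit directions --- which requires reworking the theorem's L'H\^opital-based proof to track how the estimates depend on $v$; or (b) a strengthening of the theorem to curved or sequential approaches, e.g., rerunning the contrapositive argument of part (ii) with an arbitrary sequence $x_n \to \bar x$ in $\Cc$ in place of $\bar x + vt$ and showing that its limiting direction $v$ still solves \eqref{eq:mimoLinearTest} with $c_1 \geq 0$, $c_2 \leq 0$. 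Either route is a genuine piece of additional work, not a routine verification, so as written the proposal has a real gap. (A secondary, smaller point: for a limiting direction with $\nabla h(\bar x)v = 0$ you must also confirm that $v$ counts as an admissible direction of approach in the sense of the theorem even when the ray $\bar x + tv$ itself exits $\Cc$; your appeal to \cite[Thm.~5.11]{WR:76} does not cover that case.)
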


We revisit now Examples~\ref{ex:doubleIntegrator1}
and~\ref{ex:unboundedEg} in light of the above results.  Notice that
in both cases $\Zc = \{(1,0),(-1,0)\}$. Taking $\alpha$ with
$\alpha '(0) = 1$, \eqref{eq:mimoLinearTest} at $\bar x = (1,0)$
becomes
%
\[
  -2 \begin{bmatrix}
       1&0\\
       1&1\\
       0&d
  \end{bmatrix}v = \begin{bmatrix}c_1\\c_2\\0\end{bmatrix},
\]
where $d = 1$ for Example~\ref{ex:doubleIntegrator1} and $d = 0$ for
Example~\ref{ex:unboundedEg}.  It is clear that the only possible
solution for this system of equations with $c_1 \geq 0$ and
$c_2 \leq 0$ with $d = 1$ is the trivial solution $v =
\mathbf{0}$. Thus, by Theorem~\ref{thm:mimoBoundedness}(ii), $u_1^*$
from Example~\ref{ex:doubleIntegrator1} is bounded as its argument
approaches $\bar x$, as we would expect by our analysis of
Example~\ref{ex:doubleIntegrator1}. However, a solution $v = (0,1)$
solves the system with $d = 0$, $c_1 = 0 \geq 0$ and
$c_2 = -2 < 0$. By Theorem~\ref{thm:mimoBoundedness}(i), $u_2^*$
from Example~\ref{ex:unboundedEg} goes unbounded as it approaches
$\bar x$ from the direction of $v$, which is tangential to $\Cc
$. This is also expected by our analysis of
Example~\ref{ex:unboundedEg}.


  %
\begin{remark}[When Unbounded Min-Norm Is Inevitable]\label{rem:inevitable}
  {\rm The system of linear equations in \eqref{eq:mimoLinearTest} has
    a coefficient matrix $A$ with $m+2$ rows and $n$ columns. A
    non-trivial solution $v$ to \eqref{eq:mimoLinearTest} exists if
    the first two rows of $A$ are linearly independent and the
    remaining rows are linearly independent of the first two. This
    shows that, if the system data is such that the matrix $A$
    satisfies these independence properties, then an unbounded
    min-norm controller is inevitable.}
  %
  %
  \oprocend
\end{remark}



\section{Conclusions}
We have studied the continuity and boundedness properties of the
min-norm safe feedback controller for general control-affine systems
within the framework of control barrier functions (CBF).  After
re-interpreting the known results in the literature in light of the
notion of strong and weak CBFs, we have characterized the set of
possible points of discontinuity of the minimum-norm safe controller
and shown that it only depends on the safe set (and not on the
specific CBF or the sensitivity to the violation of the CBF
condition).  Based on this characterization, we have generalized the
known conditions to guarantee the continuity of the min-norm safe
controller and identified sufficient conditions for its
(un)boundedness.  Our results have important implications for the
synthesis of safe feedback controllers subject to hard constraints on
control effort.  Future work will explore questions about the
existence of continuous safe controllers when the min-norm controller
is discontinuous but bounded, the modification of CBFs that admit safe
controllers when no control bounds are present to incorporate such
limits, and the design of discontinuous (but bounded) safe
controllers.

\section*{Acknowledgments}
Mohammed Alyaseen would like to thank Pol Mestres for pointing out
Remark~\ref{re:limiting}(i). This work was partially supported by NSF
Award RI IIS-2007141.

\bibliography{../bib/alias,../bib/Main-add}

\begin{thebibliography}{10}

\bibitem{MN:42}
M.~Nagumo.
\newblock Über die {L}age der {I}ntegralkurven gewöhnlicher
  {D}ifferentialgleichungen.
\newblock {\em Proceedings of the Physico-Mathematical Society of Japan},
  24:551--559, 1942.

\bibitem{PW-FA:07}
P.~Wieland and F.~Allg{\"o}wer.
\newblock Constructive safety using control barrier functions.
\newblock {\em IFAC Proceedings Volumes}, 40(12):462--467, 2007.

\bibitem{XX-PT-JWG-ADA:15}
X.~Xu, P.~Tabuada, J.~W. Grizzle, and A.~D. Ames.
\newblock Robustness of control barrier functions for safety critical control.
\newblock {\em IFAC-PapersOnLine}, 48(27):54--61, 2015.

\bibitem{ADA-SC-ME-GN-KS-PT:19}
A.~D. Ames, S.~Coogan, M.~Egerstedt, G.~Notomista, K.~Sreenath, and P.~Tabuada.
\newblock Control barrier functions: theory and applications.
\newblock In {\em {E}uropean {C}ontrol {C}onference}, pages 3420--3431, Naples,
  Italy, 2019.

\bibitem{WX-CGC-CB:23}
W.~Xiao, C.~G. Cassandras, and C.~Belta.
\newblock {\em Safe Autonomy with Control Barrier Functions: Theory and
  Applications}.
\newblock Synthesis Lectures on Computer Science. Springer, New York, 2023.

\bibitem{RK-ADA-SC:21}
R.~Konda, A.~D. Ames, and S.~Coogan.
\newblock Characterizing safety: minimal control barrier functions from scalar
  comparison systems.
\newblock {\em IEEE Control Systems Letters}, 5(2):523--528, 2021.

\bibitem{BJM-MJP-ADA:15}
B.~J. Morris, M.~J. Powell, and A.~D. Ames.
\newblock Continuity and smoothness properties of nonlinear optimization-based
  feedback controllers.
\newblock In {\em {IEEE} Conf.\ on Decision and Control}, pages 151--158,
  Osaka, Japan, 2015.

\bibitem{GS:18}
G.~Still.
\newblock Lectures on {P}arametric {O}ptimization: {A}n {I}ntroduction.
\newblock {\em Preprint, Optimization Online}, 2018.

\bibitem{WWH:73}
W.~W. Hogan.
\newblock Point-to-set maps in mathematical programming.
\newblock {\em SIAM Review}, 15(3):591--603, 1973.

\bibitem{AVF-YI:90}
A.~V. Fiacco and Y.~Ishizuka.
\newblock Sensitivity and stability analysis for nonlinear programming.
\newblock {\em Annals of Operations Research}, 27(1):215--235, 1990.

\bibitem{SMR:82}
S.~M. Robinson.
\newblock Generalized equations and their solutions, part {II}: {A}pplications
  to nonlinear programming.
\newblock In {\em Optimality and Stability in Mathematical Programming}, pages
  200--221. Springer, Berlin, Heidelberg, 1982.

\bibitem{WSC-DVD:20}
W.~S. Cortez and D.~V. Dimarogonas.
\newblock Correct-by-design control barrier functions for {E}uler-{L}agrange
  systems with input constraints.
\newblock In {\em {A}merican {C}ontrol {C}onference}, pages 950--955, Denver,
  CO, USA, 2020.

\bibitem{DRA-DP:21}
D.~R. Agrawal and D.~Panagou.
\newblock Safe control synthesis via input constrained control barrier
  functions.
\newblock In {\em {IEEE} Conf.\ on Decision and Control}, pages 6113--6118,
  Austin, TX, USA, 2021.

\bibitem{AC:21a}
A.~Clark.
\newblock Verification and synthesis of control barrier functions.
\newblock In {\em {IEEE} Conf.\ on Decision and Control}, pages 6105--6112,
  Austin, TX, USA, 2021.

\bibitem{JJC-DL-KS-CJT-SLH:21}
J.~J. Choi, D.~Lee, K.~Sreenath, C.~J. Tomlin, and S.~L. Herbert.
\newblock Robust control barrier-value functions for safety-critical control.
\newblock In {\em {IEEE} Conf.\ on Decision and Control}, pages 6814--6821,
  Austin, TX, USA, 2021.

\bibitem{SB-LV:09}
S.~Boyd and L.~Vandenberghe.
\newblock {\em Convex Optimization}.
\newblock Cambridge University Press, Cambridge, UK, 2009.

\bibitem{QN-KS:16}
Q.~Nguyen and K.~Sreenath.
\newblock Exponential control barrier functions for enforcing high
  relative-degree safety-critical constraints.
\newblock In {\em {A}merican {C}ontrol {C}onference}, pages 322--328, Boston,
  MA, 2016.

\bibitem{WR:76}
W.~Rudin.
\newblock {\em Principles of Mathematical Analysis}.
\newblock McGraw-Hill, 1976.

\bibitem{RL:75}
R.~Le\ ao~de Andrade.
\newblock Complete convex hypersurfaces of a {H}ilbert space.
\newblock {\em Journal of Differential Geometry}, 10(4):491--499, 1975.

\bibitem{NB:23}
N.~Boumal.
\newblock {\em An {I}ntroduction to {O}ptimization on {S}mooth {M}anifolds}.
\newblock Cambridge University Press, Cambridge, UK, 2023.

\bibitem{MS:95}
M.~Spivak.
\newblock {\em Calculus on Manifolds}.
\newblock Addison-Wesley Publishing Company, 1995.

\bibitem{EG-DJ:85}
E.~Gilbert and D.~Johnson.
\newblock Distance functions and their application to robot path planning in
  the presence of obstacles.
\newblock {\em IEEE Journal on Robotics and Automation}, 1(1):21--30, 1985.

\bibitem{HHS:03}
H.~H. Sohrab.
\newblock {\em Basic Real Analysis}.
\newblock Birkh{\"a}user, Boston, MA, 2003.

\bibitem{AET:52}
A.~E. Taylor.
\newblock L'{H}ospital's rule.
\newblock {\em The American Mathematical Monthly}, 59(1):20--24, 1952.

\end{thebibliography}
\bibliographystyle{unsrt}

\appendices

\section{}
The next results are exploited in the proof of
Theorem~\ref{thm:mimoBoundedness}.

\begin{lemma}[Basic facts on real sequences]\label{lem:sequences}
  The following facts hold:
  \begin{enumerate}
  \item Any sequence $\{a_n\} \subset \realpos$ convergent to $ 0$
    contains a subsequence $\{\bar a_n\}$ with
    $\frac{\bar a_n}{\bar a_{n+1}} \to \infty$.
  \item If the sequences $\{a_n\}, \{b_n\} \subset \real$ both
    converge to $0$, $\frac{a_n}{b_n} \to L$, and
    $\frac{b_n}{b_{n+1}} \to c \neq 1$ (not excluding $c = \infty$),
    then $\frac{a_{n} - a_{n+1}}{b_n - b_{n+1}} \to L$.
  \end{enumerate}
\end{lemma}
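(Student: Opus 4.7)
For part (i), the plan is a straightforward inductive construction. Since $a_n \to 0^+$, for any threshold $\epsilon>0$ there are infinitely many indices $n$ with $a_n<\epsilon$. I would set $n_1 := 1$ and, given $n_k$, select $n_{k+1}>n_k$ such that $a_{n_{k+1}}<a_{n_k}/k$; this is always possible by the preceding observation applied with $\epsilon = a_{n_k}/k$. Defining $\bar a_k := a_{n_k}$ then yields $\bar a_k/\bar a_{k+1} > k \to \infty$. No real obstacle here.

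For part (ii), the core idea is a Stolz--Ces\`aro-type algebraic rearrangement. Write $a_n = (L+\epsilon_n)b_n$ with $\epsilon_n \to 0$ (this requires $b_n\neq 0$ eventually, which follows since $b_n/b_{n+1}\to c\neq 1$ forces $b_n\neq 0$ for large $n$; otherwise the hypothesis $a_n/b_n\to L$ would be ill-defined). Substituting gives
\[
\frac{a_n - a_{n+1}}{b_n - b_{n+1}}
= L + \frac{\epsilon_n b_n - \epsilon_{n+1} b_{n+1}}{b_n - b_{n+1}},
\]
so the task reduces to showing the error term tends to $0$. Note that $b_n\neq b_{n+1}$ eventually because $b_n/b_{n+1}$ stays bounded away from $1$, making the quotient well-defined for large $n$.

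The main obstacle is handling the two regimes of $c$ separately, since the natural normalization differs. When $c\in\mathbb{R}\setminus\{1\}$, I would divide numerator and denominator by $b_{n+1}$ to obtain
\[
\frac{\epsilon_n (b_n/b_{n+1}) - \epsilon_{n+1}}{(b_n/b_{n+1}) - 1},
\]
where the numerator tends to $0\cdot c - 0 = 0$ and the denominator to $c-1\neq 0$, so the ratio vanishes. When $c=\infty$, that normalization fails, so instead I would divide by $b_n$ to get
\[
\frac{\epsilon_n - \epsilon_{n+1}(b_{n+1}/b_n)}{1 - (b_{n+1}/b_n)},
\]
and now $b_{n+1}/b_n \to 0$ gives numerator $\to 0$ and denominator $\to 1$. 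In both cases the error term vanishes, yielding the claimed limit $L$.

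Beyond this case split, the proof is essentially bookkeeping; the only subtlety to flag is verifying that all the expressions in the manipulation are well-defined for sufficiently large $n$ (both $b_n\neq 0$ and $b_n\neq b_{n+1}$), which is where the hypothesis $c\neq 1$ is essential and explains why this hypothesis appears in the statement.
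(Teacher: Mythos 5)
Your proposal is correct and follows essentially the same route as the paper: part (i) is the same greedy subsequence construction, and part (ii) is the same algebraic rearrangement exploiting $c\neq 1$ (the paper packages it as the single identity $\frac{a_n-a_{n+1}}{b_n-b_{n+1}}-\frac{a_n}{b_n}=\bigl(\frac{a_n}{b_n}-\frac{a_{n+1}}{b_{n+1}}\bigr)/\bigl(\frac{b_n}{b_{n+1}}-1\bigr)$, which handles $c=\infty$ without your case split). Your explicit checks that $b_n\neq 0$ and $b_n\neq b_{n+1}$ for large $n$ are welcome bookkeeping that the paper leaves implicit.
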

\begin{proof}
  To prove \emph{(i)}, the subsequence $\{\bar a_n\}$ can be
  constructed as follows. Take $\bar a_1 = a_1$. By definition of
  convergence, for any $\bar a_n > 0$, there is
  $a_m \leq \bar a_n / n$. Taking $\bar a_{n+1} = a_m$ gives
  $\frac{\bar a_n}{\bar a_{n+1}} \geq n$.  Statement \emph{(ii)}
  follows directly from noting that
  $\frac{a_{n} - a_{n+1}}{b_n - b_{n+1}} - \frac{a_n}{b_n} = \left
    (\frac{a_n}{b_n} - \frac{a_{n+1}}{b_{n+1}}\right )$
  $ /\left ({\frac{b_n}{b_{n+1}} - 1}\right ) \to 0$.
\end{proof}



The following generalized version of L'H\^opital's rule is convenient
for our purposes.

\begin{lemma}[{Generalized
    L'H\^opital~\cite[Thm. II]{AET:52}}]\label{lem:genLhopital}
  Let the functions $f,g: (a,b) \to \real$ be continuously
  differentiable on $(a,b)$, with neither $g$ nor $g$ vanishing on
  $(a,b)$. Then
  $\liminf_{t \to a^+}\frac{f'(t)}{g'(t)} \leq \liminf_{t \to
    a^+}\frac{f(t)}{g(t)}\leq \limsup_{t \to a^+}\frac{f(t)}{g(t)}\leq
  \limsup_{t \to a^+}\frac{f'(t)}{g'(t)}$.
\end{lemma}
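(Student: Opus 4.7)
The plan is to first note that the central inequality $\liminf_{t\to a^+}\tfrac{f(t)}{g(t)} \le \limsup_{t\to a^+}\tfrac{f(t)}{g(t)}$ is automatic, so the content lies in the two outer inequalities. These are in fact symmetric: replacing $f$ by $-f$ interchanges $\liminf$ and $\limsup$ in both $f/g$ and $f'/g'$, so the lower bound on $\liminf f/g$ is equivalent to the upper bound on $\limsup f/g$. It therefore suffices to establish
\[
\limsup_{t\to a^+}\frac{f(t)}{g(t)} \;\le\; \limsup_{t\to a^+}\frac{f'(t)}{g'(t)}.
\]
The nonvanishing hypothesis (most naturally read as ``neither $g$ nor $g'$ vanishes'', correcting the apparent typo) guarantees that both quotients are well-defined and permits Cauchy's Mean Value Theorem to be applied on any closed subinterval of $(a,b)$.

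The main tool is Cauchy's Mean Value Theorem. For the $0/0$ case, which is the one actually needed in the proof of Theorem~\ref{thm:mimoBoundedness} (where both $N(t)$ and $D(t)$ go to $0$), I would extend $f$ and $g$ continuously to $[a,b)$ by setting $f(a)=g(a)=0$. For each $t\in(a,b)$, Cauchy's MVT on $[a,t]$ delivers a point $s_t\in(a,t)$ with
\[
\frac{f(t)}{g(t)}=\frac{f(t)-f(a)}{g(t)-g(a)}=\frac{f'(s_t)}{g'(s_t)}.
\]
Since $s_t\in(a,t)$, we have $s_t\to a^+$ as $t\to a^+$. Consequently, for any $M>\limsup_{s\to a^+} f'(s)/g'(s)$ there is $\delta>0$ with $f'(s)/g'(s)<M$ throughout $(a,a+\delta)$; the identity above then forces $f(t)/g(t)<M$ on the same interval, giving $\limsup_{t\to a^+} f(t)/g(t)\le M$. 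Letting $M$ decrease to $\limsup f'/g'$ closes the argument.

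The main obstacle is the $\infty/\infty$ case, which the statement also appears to cover. Extension by zero is no longer available, so the approach I would take is to fix $t_0\in(a,b)$ near $a$ and apply Cauchy's MVT on $[t,t_0]$ for $t<t_0$, obtaining $s\in(t,t_0)$ with
\[
\frac{f(t)-f(t_0)}{g(t)-g(t_0)}=\frac{f'(s)}{g'(s)},
\]
and then factor the left-hand side as $\tfrac{f(t)}{g(t)}\cdot \tfrac{1-f(t_0)/f(t)}{1-g(t_0)/g(t)}$. As $t\to a^+$, the correction factor tends to $1$ because $|f(t)|,|g(t)|\to\infty$, and the delicate part is to choose the parameters in the correct order: first fix a target upper bound $M>\limsup f'/g'$, then choose $t_0$ so that $f'(s)/g'(s)<M$ on $(a,t_0)$, and finally drive $t\to a^+$ while controlling the correction factor uniformly through a two-step $\varepsilon$-argument. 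This nested quantification is what distinguishes the $\infty/\infty$ analysis from the clean one-shot argument available in the $0/0$ setting.
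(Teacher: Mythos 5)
The paper does not actually prove this lemma: it is quoted verbatim from Taylor's article \cite[Thm.~II]{AET:52} and used as a black box, so there is no in-paper argument to compare against. Your proof is therefore judged on its own merits, and it is essentially the standard (and correct) argument: the $f\mapsto -f$ symmetry reducing the two outer inequalities to one, and Cauchy's Mean Value Theorem after extending $f$ and $g$ by $0$ to the left endpoint. That handles the $0/0$ case completely, and you are right that this is the only case the paper needs (in Lemma~\ref{lem:halfThm1} and Theorem~\ref{thm:mimoBoundedness}, both $N(t)$ and $D(t)$ vanish at $t=0$). Two caveats. First, you correctly sensed but should state explicitly that the lemma \emph{as printed} is false: besides the obvious typo ($g'$, not $g$, must be nonvanishing), it omits the indeterminate-form hypothesis, and without ``$f,g\to 0$'' or ``$|g|\to\infty$'' the leftmost inequality fails (e.g.\ $f(t)=1+t$, $g(t)=2+t$ near $a=0$ gives $\liminf f'/g'=1>1/2=\liminf f/g$); your extension-by-zero step is exactly where this hypothesis gets used. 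Second, in your $\infty/\infty$ sketch the factor $\frac{1-f(t_0)/f(t)}{1-g(t_0)/g(t)}$ divides by $f(t)$, which is not legitimate in general: Taylor's Theorem~II assumes only $|g|\to\infty$, and $f$ may stay bounded or vanish. The robust version of the same idea is to write $\frac{f(t)}{g(t)}=\frac{f'(s)}{g'(s)}\bigl(1-\frac{g(t_0)}{g(t)}\bigr)+\frac{f(t_0)}{g(t)}$, where both correction terms are controlled using only $|g(t)|\to\infty$. Neither caveat affects the soundness of the part of your proof that the paper actually relies on.
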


The following result shows a key property in the technical
argumentation of the proof of Theorem~\ref{thm:mimoBoundedness}.

\begin{lemma}\label{lem:halfThm1}
  Under the assumptions of Theorem~\ref{thm:mimoBoundedness}, let
  $v\in \real^n$ be such that
  $\limsup_{t \to 0^+} \|u^*(\bar x_t)\|= \infty$ (recall
  $\bar x_t = \bar x + vt$) and $v^T\beta_f(\bar x) \leq 0$. Then,
  there exists a sequence $\{t_i\} \to 0^+$ with $\{\bar x_{ t_i}\} \subset \Dc_-$ such that
  $v^T \beta_G(\bar x_{t_i}) \frac{(\nabla h(\bar x_{t_i})G(\bar
    x_{t_i}))^T}{\|\nabla h(\bar x_{t_i})G(\bar x_{t_i})\|} \to 0$.
\end{lemma}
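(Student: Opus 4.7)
The plan is to translate the desired conclusion into a statement about the right-derivative of $D(t):=\|\nabla h(\bar x_t) G(\bar x_t)\|$ at $t=0^+$, and then extract the sequence directly from a first-order analysis of the numerator and denominator of $\|u^*(\bar x_t)\|$ along the ray. Setting $N(t):=\nabla h(\bar x_t)f(\bar x_t)+\alpha(h(\bar x_t))$ and $h_G(t):=\nabla h(\bar x_t)G(\bar x_t)$, the formula~\eqref{eq:minnorm} gives $\|u^*(\bar x_t)\|=-N(t)/\|h_G(t)\|$ on $\Dc_-$, while $u^*$ vanishes on $\Dc_+$. Hence $\limsup_{t\to 0^+}\|u^*(\bar x_t)\|=\infty$ produces a sequence $\{s_j\}\to 0^+$ with $\bar x_{s_j}\in\Dc_-$ and $-N(s_j)/\|h_G(s_j)\|\to\infty$. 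Because $\bar x\in\Zc$, we have $N(0)=0$ and $h_G(0)=0$, so this is a genuine $0/0$ blow-up.

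Next, using $h\in C^2$, $f,G\in C^1$, $\alpha$ differentiable at $0$, and $h(\bar x)=0$, I would compute the one-sided directional derivatives at $t=0$ by the chain rule: $N'(0^+)=v^T\beta_f(\bar x)$ and $h_G'(0^+)=v^T\beta_G(\bar x)$. Taylor's theorem then yields $-N(s_j) = -v^T\beta_f(\bar x)\,s_j + o(s_j)$ and $\|h_G(s_j)\| = \|v^T\beta_G(\bar x)\|\,s_j + o(s_j)$. I would then argue by contradiction that $v^T\beta_G(\bar x)=0$: if $v^T\beta_G(\bar x)\ne 0$, then $\|h_G(s_j)\|\ge c\,s_j$ for some $c>0$ and all small $s_j$, whereas the hypothesis $v^T\beta_f(\bar x)\le 0$ forces $-N(s_j)=O(s_j)$. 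The ratio $-N(s_j)/\|h_G(s_j)\|$ would therefore be bounded, contradicting the blow-up. Consequently $v^T\beta_G(\bar x)=0$.

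To conclude, continuity of $\beta_G$ at $\bar x$ yields $v^T\beta_G(\bar x_{s_j})\to 0$, and since $(\nabla h(\bar x_{s_j})G(\bar x_{s_j}))^T/\|\nabla h(\bar x_{s_j})G(\bar x_{s_j})\|$ is a unit vector in $\real^m$, the product $v^T\beta_G(\bar x_{s_j})(\nabla h(\bar x_{s_j})G(\bar x_{s_j}))^T/\|\nabla h(\bar x_{s_j})G(\bar x_{s_j})\|$ tends to $0$. Setting $t_i:=s_i$ gives the desired sequence. The main technical obstacle is securing the one-sided differentiability of $N$ at $t=0^+$: since $\alpha$ is assumed only differentiable at $0$ (not $C^1$), the derivative of the composition $\alpha\circ h$ along the ray must be obtained by a pointwise chain-rule argument that exploits $h(\bar x)=0$, rather than the standard $C^1$ composition rule. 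Once the two first-order expansions are in place, the contradiction step is elementary.
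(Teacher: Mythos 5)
Your proof is correct, and it takes a genuinely different --- and in fact shorter and stronger --- route than the paper's. The paper splits into the cases $v^T\beta_f(\bar x)<0$ and $v^T\beta_f(\bar x)=0$, invokes a generalized L'H\^opital rule (Lemma~\ref{lem:genLhopital}) in the first case, and in the second manufactures the sequence $\{\bar t_i\}$ by hand via the intermediate value theorem plus interior-maximum and Cauchy mean-value arguments on the intervals where $N<0$; the output is only the sequential statement $D'(\bar t_i)\to 0$. You instead exploit that $\bar x\in\Zc$ forces $N(0)=0$ and $h_G(0)=0$, so a single first-order expansion along the ray gives $-N(t)=O(t)$ (for either sign of $v^T\beta_f(\bar x)$, so your argument does not even need the hypothesis $v^T\beta_f(\bar x)\le 0$) while $\|h_G(t)\|\ge c\,t$ whenever $v^T\beta_G(\bar x)\neq 0$; the blow-up of $-N/\|h_G\|$ along the $\Dc_-$ sequence then forces $v^T\beta_G(\bar x)=0$ outright, and the stated limit follows for that same sequence by Cauchy--Schwarz and continuity of $\beta_G$. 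This is not only simpler (no case analysis, no auxiliary L'H\^opital or Rolle-type lemmas) but delivers directly the conclusion $v^T\beta_G(\bar x)=\mathbf{0}^T$ that the proof of Theorem~\ref{thm:mimoBoundedness}(ii) subsequently extracts from the lemma through the $\zeta$-subsequence construction and Lemma~\ref{lem:sequences}; adopting your argument would therefore also shorten that proof. Two small points to make explicit if you write this up: the final step needs $\beta_G$ continuous at $\bar x$, i.e.\ $G\in C^1$ near $\bar x$ rather than merely differentiable at $\bar x$ (the paper's own proof tacitly assumes the same), and the pointwise chain rule for $\alpha\circ h$ at $h(\bar x)=0$ that you flag is indeed valid under differentiability of $\alpha$ at $0$ alone, so that obstacle is genuinely harmless.
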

\begin{proof}
  We utilize the abbreviations introduced at the beginning of the
  proof of Theorem~\ref{thm:mimoBoundedness} for convenience.  We
  consider the cases $v^T\beta_f(\bar x) < 0$ and
  $v^T\beta_f(\bar x) = 0$ separately.

  Case~1: If $v^T\beta_f(\bar x) < 0$, then
  by~\cite[Thm. 5.11]{WR:76}, $\bar x_t \in \Dc_-$ for sufficiently
  small~$t$.  Hence,
  $\limsup_{t \to 0^+}\|u^*(\bar x_t)\| = \lim_{t \to
    0^+}\frac{-N(t)}{D(t)} = \infty$.  Direct evaluation gives a
  $\frac{0}{0}$ type of limit. By Lemma~\ref{lem:genLhopital},
  $\limsup_{t \to 0^+}\frac{-{d}/{dt}(N(t))}{{d}/{dt}(D(t))} =
  \infty.$ Since $\dt(N(t)) = v^T\beta_f(\bar x_t)$ is continuous at
  $t = 0$, it is bounded on a small enough interval
  $t \in [0,\epsilon]$. Thus, the only way the $\limsup$ approaches
  $\infty$ is that there exists a sequence $\{t_i\}$ such that
  $\dt(D(t))|_{t = t_i} = v^T \beta_G(\bar x_{t_i})
  \frac{h_G(t_i)}{\|h_G(t_i)\|} \to
  0$.

  Case~2: If $v^T\beta_f(\bar x) = 0$, then for small enough positive
  $t$, either $\bar x_t \in \Dc_+$, $\bar x_t \in \Dc_-$, or
  $\bar x_t$ alternates between $\Dc_-$ and $\Dc_+$ indefinitely.  The
  first case is impossible if $u^*$ goes unbounded as $t \to 0^+$. The
  second case can be handled analogously to Case~1. Hence, we focus on
  the last case, where $\bar{x}_t$ alternates between $\Dc_-$ and
  $\Dc_+$ indefinitely as $t \to 0^+$.  This means that the continuous
  function $N$ approaches $0$ by alternating between positive and
  negative values indefinitely as $t \to 0^+$. By assumption, there
  exists $\{t_i\}$ such that $ \|u^*(\bar x_{t_i})\| \to
  \infty$. Without loss of generality, we assume that
  $\bar x_{t_i} \in \Dc_-$ for all $i$ and that $u^*(\bar x_{t_i})$ grows
  monotonically (a subsequence satisfying these assumptions can always
  be found). By continuity of $N(t)$ and the intermediate value
  theorem~\cite[Thm. 4.23]{WR:76}, for every $t_i$ there is an
  interval $(t_{1,i},t_{2,i})$ such that $\bar x_{t} \in \Dc_-$ for
  all $t \in (t_{1,i},t_{2,i})$ and $N(t_{1,i}) = N(t_{2,i}) = 0$. We
  distinguish two cases.

  Case~2.1: Assume there exists $\bar n$ such that
  $D(t_{1,i})D(t_{2,i}) \neq 0$ for all $i \ge \bar n$.  Thus,
  $\frac{N(t_{1,i})}{D(t_{1,i})} = \frac{N(t_{2,i})}{D(t_{2,i})} =
  0$. This together with the continuity of $N(t)/D(t)$ in
  $(t_{1,i},t_{2,i})$ implies that there exists
  $\bar t_i \in (t_{1,i},t_{2,i})$ where $-N(t)/D(t)$ attains its
  maximum in $(t_{1,i},t_{2,i})$. Therefore, the sequence
  $-\frac{N(\bar t_i)}{D(\bar t_i)}$ approaches $\infty$ since
  $\frac{- N(\bar t_i)}{D(\bar t_i)} \geq \frac{- N(t_i)}{D(t_i)} \to
  \infty$. The continuous differentiability of $-N(t)/D(t)$ in
  $(t_{1,i},t_{2,i})$ for all $i$, ensured by the lemma's assumptions,
  implies that
  $0 = \dt \big ( \frac{-N(t)}{D(t)} \big )\big |_{t = \bar t_i} =
  \frac{N'(\bar t_i)D(\bar t_i) - N(\bar t_i)D'(\bar t_i)}{D(\bar
    t_i)^2}$.  Keeping in mind that $\bar{x}_{\bar t_i} \in \Dc_-$ and
  thus $D(\bar{t}_i) \neq 0$, it follows that
  $N'(\bar t_i) = D'(\bar t_i)\frac{N(\bar t_i)}{D(\bar t_i)}$. Now
  since $\frac{N(\bar t_i)}{D(\bar t_i)} \to -\infty$ but
  $N'(t) = v^T\beta_f(\bar x_t) \to 0$, it should be that
  $D'(\bar t_i) = v^T\beta_G(\bar x_{\bar t_i})\frac{h_G(\bar
    t_i)}{\|h_G(\bar t_i)\|} \to 0$, which proves the statement.
  
  Case~2.2: Assume that for all $n$, there exists $i > n$ such that
  $D(t_{1,i})D(t_{2,i}) = 0$.
  Without loss of generality, assume $D(t_{1,i}) = 0$ for all $i$ (the
  same reasoning can be applied when $D(t_{2,i}) = 0$ for all $i$ or
  when this alternates). By assumption, both functions $N$ and $D$ are
  continuous on $[t_{1,i},t_i]$ and differentiable on
  $(t_{1,i},t_i)$. Using~\cite[Theorem 5.9]{WR:76}, for each $i$,
  there exists $\bar t_i$ such that
  $N'(\bar t_i) = \frac{N(t_i) - N(t_{1,i})}{D(t_i) -
  D(t_{1,i})}D'(\bar t_i) = \frac{N(t_i)}{D(t_i)}D'(\bar t_i)$.  A
  reasoning similar to that of Case~2.1 now yields
  $D'(\bar t_i) \to 0$.
\end{proof}

\end{document}